\documentclass[intlimits,12pt,reqno]{amsart}
\usepackage{latexsym,amsthm,amsmath,amssymb,mathrsfs,mathtools}
\usepackage[T1]{fontenc} 
\usepackage[utf8]{inputenc} 
\usepackage[mathscr]{eucal}

\usepackage{tikz}
\usetikzlibrary{arrows,calc,patterns,cd}
\usepackage{wrapfig}

\setlength{\oddsidemargin}{1pt}
\setlength{\evensidemargin}{1pt}
\setlength{\topmargin}{1pt}       
\setlength{\textheight}{650pt}    
\setlength{\textwidth}{460pt}     

\belowdisplayskip=18pt plus 6pt minus 12pt \abovedisplayskip=18pt plus 6pt minus 12pt
\parskip 8pt plus 1pt

\def\mvint_#1{\mathchoice
          {\mathop{\vrule width 6pt height 3 pt depth -2.5pt
                  \kern -9pt \intop}\limits_{\kern -3pt #1}}%
          {\mathop{\vrule width 5pt height 3 pt depth -2.6pt
                  \kern -6pt \intop}\nolimits_{#1}}%
          {\mathop{\vrule width 5pt height 3 pt depth -2.6pt
                  \kern -6pt \intop}\nolimits_{#1}}%
          {\mathop{\vrule width 5pt height 3 pt depth -2.6pt
                  \kern -6pt \intop}\nolimits_{#1}}}

\newcommand{\bbbr}{\mathbb R}
\newcommand{\bbbb}{\mathbb B}
\newcommand{\bbbd}{\mathbb D}

\newcommand{\bbbz}{\mathbb Z}
\newcommand{\bbbi}{\mathbb I}

\newcommand{\R}{\mathbb R}
\newcommand{\overbar}[1]{\mkern 1.7mu\overline{\mkern-1.7mu#1\mkern-1.5mu}\mkern 1.5mu}

\newcommand{\eps}{\varepsilon}

\def\rank{\operatorname{rank}}

\newtheorem{theorem}{Theorem}
\newtheorem*{theorem*}{Theorem}
\newtheorem{lemma}[theorem]{Lemma}

\newtheorem{conjecture}[theorem]{Conjecture}
\theoremstyle{definition}
\newtheorem{remark}[theorem]{Remark}
\newtheorem*{remark*}{Remark}
\newtheorem{example}{Example}

\newcommand{\Sph}{\mathbb S}

\usepackage{setspace}

\title[$C^1$ mappings with derivative of small rank]{$C^1$ mappings in $\bbbr^5$ with derivative
of rank at most $3$ cannot be
uniformly approximated by $C^2$ mappings with derivative of rank at most 3}

\author[P. Goldstein]{Pawe\l{}  Goldstein}
\address{Pawe\l{} Goldstein, Institute of Mathematics, Faculty of Mathematics, Informatics and Mechanics, University of Warsaw, Banacha 2, 02-097 Warsaw, Poland} \email{P.Goldstein@mimuw.edu.pl}
\thanks{P.G. was partially supported by National Science Center grant no 2012/05/E/ST1/03232.}
\author[P. Haj\l{}asz]{Piotr Haj\l{}asz}
\address{Piotr Haj\l{}asz, Department of Mathematics, University of Pittsburgh, Pittsburgh, PA 15260, USA}
\email{hajlasz@pitt.edu}
\thanks{P.H.\ was supported by NSF grant DMS-1800457}

\begin{document}
\sloppy

\subjclass[2010]{Primary:  41A29; Secondary: 26B05, 26B10, 57R12}
\keywords{approximation by smooth mappings, rank of the derivative, Sard's theorem}

\begin{abstract}
We find a counterexample to a conjecture of Gałęski \cite{Galeski2017} by constructing for some positive integers $m<n$
a mapping $f\in C^1(\bbbr^n,\bbbr^n)$
satisfying $\rank Df\leq m$ that, even locally, cannot be uniformly approximated by $C^2$ mappings $f_\eps$ satisfying
the same rank constraint: $\rank Df_\eps\leq m$.
\end{abstract}

\maketitle

\section{Introduction}
In the context of geometric measure theory
Jacek Gałęski \cite[Conjecture~1.1 and ~Section~3.3]{Galeski2017} formulated the following conjecture.
\begin{conjecture}
\label{con1}
Let $1\leq m<n$ be integers and let $\Omega\subset\bbbr^n$ be open.
If $f\in C^1(\Omega,\bbbr^n)$ satisfies $\rank Df\leq m$ everywhere in $\Omega$, then $f$ can be uniformly approximated by smooth mappings $g\in C^{\infty}(\Omega,\bbbr^n)$ such that $\rank Dg\leq m$ everywhere in $\Omega$.
\end{conjecture}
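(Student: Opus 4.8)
This conjecture is in fact \emph{false}, already for $n=5$ and $m=3$ (which is the paper's main theorem), so the plan is to construct $f\in C^1(\bbbr^5,\bbbr^5)$ with $\rank Df\le 3$ that, on no open set, is a uniform limit of $C^2$ maps $g$ with $\rank Dg\le 3$. Everything turns on a gap in the Morse--Sard--Federer theorem: for a $C^k$ map with $\rank\le m$ the image of the low-rank locus must have small Hausdorff dimension, the bound improving as $k$ grows and becoming a genuine dimensional restriction once $k=n-m$, whereas $k=1$ gives only Lebesgue measure zero and leaves room for a full-dimensional image.

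\emph{The obstruction for $C^2$ maps.} Fix a smoothly embedded $3$-sphere $S\cong\Sph^3$ in the domain, bounding a smooth $4$-disk $\overline B$. I would first prove: if $g\in C^2(\bbbr^5,\bbbr^5)$ and $\rank Dg\le 3$, then $\dim_{\mathcal H}g(\overline B)\le 3$. On the open set where $\rank D(g|_{\overline B})=3$, the constant-rank theorem lets $g|_{\overline B}$ locally factor through $\bbbr^3$, so that piece of the image is a countable union of $C^1$ $3$-submanifolds; on the closed set where $\rank D(g|_{\overline B})\le 2$, Federer's refinement of Sard's theorem gives image of $\mathcal H^{2+(4-2)/2}=\mathcal H^3$-measure zero. (A merely $C^1$ map only yields $\mathcal H^{2+(4-2)/1}=\mathcal H^4$-measure zero, leaving room for a $4$-dimensional image; this is the crack to exploit.) Writing $\bbbr^5=\bbbr^3\times\bbbr^2$, put $\gamma_{x,r}=\{x\}\times r\Sph^1$ for $(x,r)\in\bbbr^3\times(0,\infty)$. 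If $\gamma_{x,r}\cap g(S)=\varnothing$ and $\operatorname{lk}(g|_S,\gamma_{x,r})\neq 0$, then $\gamma_{x,r}$ must meet $g(\overline B)$ --- otherwise $g|_{\overline B}$ would be a null-homotopy of $g|_S$ in $\bbbr^5\setminus\gamma_{x,r}$ --- and ``$\gamma_{x,r}$ meets a set $A$'' means precisely that $(x,r)$ lies in the image of $A$ under the $1$-Lipschitz map $(x,y)\mapsto(x,|y|)$. Hence
\[
\dim_{\mathcal H}\bigl\{(x,r):\ \gamma_{x,r}\cap g(S)=\varnothing,\ \operatorname{lk}(g|_S,\gamma_{x,r})\neq 0\bigr\}\ \le\ \dim_{\mathcal H}g(\overline B)\ \le\ 3 .
\]

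\emph{Transfer to the limit, and the construction.} Linking numbers are homotopy invariants, and uniform closeness on $S$ supplies a straight-line homotopy in the complement of any circle that stays a fixed distance from $g(S)$; so if $g_k\to f$ uniformly near $S\cup\overline B$ with $\rank Dg_k\le 3$, then on every compact parameter set on which $\dist(\gamma_{x,r},f(S))$ is bounded below one has $\operatorname{lk}(g_k|_S,\gamma_{x,r})=\operatorname{lk}(f|_S,\gamma_{x,r})$ for $k$ large, and the displayed bound passes to $f$: $\dim_{\mathcal H}\{(x,r):\gamma_{x,r}\cap f(S)=\varnothing,\ \operatorname{lk}(f|_S,\gamma_{x,r})\neq 0\}\le 3$. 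It therefore suffices to build $f\in C^1(\bbbr^5,\bbbr^5)$ with $\rank Df\le 3$ for which this Hausdorff dimension exceeds $3$. I would begin from a piecewise smooth model whose restriction to $\overline B$ has image that, after the projection $(x,y)\mapsto(x,|y|)$, is honestly $4$-dimensional and threads the circles $\gamma_{x,r}$ with nonzero linking over a $4$-dimensional set of $(x,r)$ --- compatible with $\rank\le 3$ because the collapse of two target directions is engineered as a \emph{singular}, non-submersive degeneration along a fat (positive-measure) Cantor set rather than a fibration --- and then upgrade the model to a genuine $C^1$ map preserving the rank bound and the linking data, using the standard $C^1$ devices (a $C^1$ function whose gradient vanishes on a positive-measure Cantor set, in the spirit of Kaufman's $C^1$ rank-$1$ surjection of a cube onto a square). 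Reproducing the configuration at every scale about a single point then gives the \emph{local} non-approximability.

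\emph{Where the difficulty lies.} The one genuinely hard step is this construction: producing an honest $C^1$ --- not merely Lipschitz --- map that is simultaneously everywhere degenerate ($\rank Df\le 3$) and has a $4$-disk image which is $4$-dimensional after the projection, with the linking number provably nonzero on a parameter set of dimension $>3$. This is exactly the regime in which Sard's theorem for the rank has not yet switched on ($C^{n-m}=C^2$ being the threshold), and quantifying that window while keeping everything $C^1$ and localisable is the crux. The remaining points are routine: the sharp exponent $3$ in the obstruction (one must split the rank-$3$ locus, handled by the rank theorem, from the rank $\le 2$ locus, handled by Federer), and the fact that $\operatorname{lk}(f|_S,\gamma_{x,r})$ is well defined and stable for the merely $C^1$ restriction $f|_S$, which follows by approximating $f|_S$ by smooth maps on $S$.
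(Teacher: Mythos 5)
You are right that the statement is false and that the paper's own resolution is a counterexample with $n=5$, $m=3$; your $C^2$ obstruction (rank theorem on the rank-$3$ locus, Federer--Sard on the rank-$\le 2$ locus, the $1$-Lipschitz projection $(x,y)\mapsto(x,|y|)$, and the uniform-limit transfer) is essentially correct as stated. The fatal problem is the step you defer as ``the one genuinely hard step'': the $C^1$ map you need does not exist, so the approach cannot work. First note that the set $\{(x,r):\gamma_{x,r}\cap f(S)=\varnothing,\ \operatorname{lk}(f|_S,\gamma_{x,r})\neq 0\}$ is automatically open (it is a union of components of the complement of the compact set $\pi(f(S))$, $\pi(x,y)=(x,|y|)$, on which the linking number is locally constant), so ``dimension $>3$'' really means ``nonempty''. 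But it is empty for \emph{every} $f\in C^1$ with $\rank Df\le 3$: since $\gamma_{x,r}=\pi^{-1}(x,r)$ and $\pi$ carries a linking sphere of $\gamma_{x,r}$ homeomorphically onto a small sphere about $(x,r)$, the linking number equals the winding number of $\pi\circ f|_S:\Sph^3\to\bbbr^3\times[0,\infty)$ about $(x,r)$. If it were nonzero, a whole open neighborhood $W$ of $(x,r)$ with $r>0$, disjoint from $\pi(f(S))$, would consist of points of nonzero winding, hence $W\subset (\pi\circ f)(A)$ where $A=\{p\in \operatorname{int}\overline B:\ f(p)\notin\bbbr^3\times\{0\}\}$; on the open $4$-manifold $A$ the map $\pi\circ f$ is $C^1$ with rank $\le 3<4$, so every point is critical, and Sard's theorem in the equidimensional case $\bbbr^4\to\bbbr^4$ holds already for $C^1$ maps, making $(\pi\circ f)(A)$ Lebesgue null --- contradicting $W$ open. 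So your linking invariant vanishes identically already at the $C^1$ level; it cannot distinguish $C^1$ from $C^2$ and no Kaufman-type construction can produce what you ask for.

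The conceptual point, and the reason the paper's argument is set up differently, is that any homological (degree/linking) obstruction effectively collapses the problem to an equidimensional Sard statement, where the threshold is $C^1$ and no gap remains; the window you want to exploit only opens when the effective target dimension is strictly smaller than the effective domain dimension, and then only for invariants with no homological shadow. Accordingly, the paper uses a genuinely homotopy-theoretic invariant: a nontrivial class in $\pi_k(\Sph^m)$ with $k>m$ (for your case $\pi_4(\Sph^3)=\bbbz_2$), realized as the boundary values $F|_{\Sph^k}:\Sph^k\to\partial\bbbi$ of a $C^1$ map $F:\bbbb^{k+1}\to\bbbr^{m+1}$ with $\rank DF\le m$, built by a self-similar Cantor-set construction (Lemma~\ref{GH}, from \cite{GH_Sard}); a $C^{k-m+1}$ approximant with $\rank\le m$ that is uniformly close on a rescaled copy would have to cover the cube $\frac12\bbbi$ by the homotopy argument, contradicting Sard for $C^{k-m+1}$ maps $\bbbb^{k+1}\to\bbbr^{m+1}$, whose threshold is exactly $k-m+1\ge 2$. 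If you want to salvage your write-up, the obstruction must be rebuilt around such a non-nullhomotopic (degree-zero) boundary map rather than around linking numbers, and the hard construction you postponed is precisely the content of \cite{GH_Sard}.
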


A weaker form of the conjecture is whether any mapping as in  Conjecture~\ref{con1} can be approximated locally.
\begin{conjecture}
\label{con2}
Let $1\leq m<n$ be integers and let $\Omega\subset\bbbr^n$ be open.
If $f\in C^1(\Omega,\bbbr^n)$ satisfies $\rank Df\leq m$ everywhere in $\Omega$, then for every point $x\in\Omega$ there is 
a neighborhood $\bbbb^n(x,\eps)\subset\Omega$ and a sequence $f_i\in C^\infty(\bbbb^n(x,\eps),\bbbr^n)$ such that $\rank Df_i\leq m$ and $f_i$ converges to $f$ 
uniformly on $\bbbb^n(x,\eps)$.
\end{conjecture}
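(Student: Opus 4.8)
We do not prove this conjecture; on the contrary, the plan is to disprove it (and, a fortiori, Conjecture~\ref{con1}) for $n=5$, $m=3$, by exhibiting a single $f\in C^1(\bbbr^5,\bbbr^5)$ with $\rank Df\le 3$ that, on every ball centred at a fixed point $x_0$, fails to be a uniform limit of $C^2$ maps with $\rank\le3$ (since $C^\infty\subset C^2$, this contradicts the conjectures as stated). The underlying mechanism is the gap in the Morse--Sard--Federer theorem between the critical smoothness $C^{n-m}=C^2$ and one derivative below it: a $C^k$ map $\bbbr^N\to\bbbr^5$ with $\rank\le3$ everywhere has $\mathcal H^{3+(N-3)/k}$--null image, so a $C^2$ map on a $5$-ball has $\mathcal H^4$--null image, whereas a $C^1$ map with the same rank bound need only have $\mathcal H^5$--null image and can have a genuinely $4$-dimensional image, by a Kaufman-type construction. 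An $\mathcal H^4$--null set $K\subset\bbbr^5$ can always be slid off a fixed line $\ell$: orthogonal projection onto the $4$-plane complementary to $\ell$ carries $K$ to a Lebesgue-null subset of $\bbbr^4$, so $\ell+v$ misses $K$ for suitable small $v$; a $4$-dimensional $C^1$ image may fail this, and that is the flexibility to be exploited.

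We would package the argument as a linking obstruction. Fix a line $\ell\subset\bbbr^5$; then $\bbbr^5\setminus\ell$ deformation retracts onto a $3$-sphere, and for continuous $u\colon\Sph^4\to\bbbr^5$ with $u(\Sph^4)\cap\ell=\emptyset$ set $\lambda(u)\in\pi_4(\Sph^3)=\bbbz/2$ to be the class of $u$ composed with the linking (Gauss) map $\bbbr^5\setminus\ell\to\Sph^3$; it is a homotopy invariant and is unchanged under $C^0$--small perturbations of $u$ keeping $u(\Sph^4)$ away from $\ell$. The rigidity half reads: if $g\in C^2(\overline{\bbbb^5},\bbbr^5)$ has $\rank Dg\le3$ and $\ell\cap g(\partial\bbbb^5)=\emptyset$, then $\lambda(g|_{\partial\bbbb^5})=0$. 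To see it, slide $\ell$ to a parallel line $\ell'$ disjoint from the $\mathcal H^4$--null set $g(\overline{\bbbb^5})$ and isotopic to $\ell$ within $\bbbr^5\setminus g(\partial\bbbb^5)$ (possible by the observation above together with a distance estimate, since $g(\partial\bbbb^5)$ is compact), and note that $g|_{\overline{\bbbb^5}}$ is then itself a null-homotopy of $g|_{\partial\bbbb^5}$ in $\bbbr^5\setminus\ell'$. Thus $\lambda$ vanishes on boundary restrictions of $C^2$ rank-$\le3$ maps, and by the homotopy-stability of $\lambda$ the same holds for any $f$ that is a uniform limit of such $g$ with all $g(\partial\bbbb^5)$ bounded away from $\ell$. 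So it suffices to build an $f$ with $\rank Df\le3$ and $\lambda(f|_{\partial\bbbb^5})\ne0$.

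For the construction we separate the two issues. The linking part is immediate: writing $\bbbr^5=\bbbr\times\bbbr^4$ and $\ell=\bbbr\times\{0\}$, let $\psi\colon\overline{\bbbb^4}\hookrightarrow\bbbr^5$ be the flat inclusion $y\mapsto(0,y)$, so $\psi(\overline{\bbbb^4})$ meets $\ell$ only in $(0,0)$ while $\psi(\Sph^3)=\{0\}\times\Sph^3$ is a round $3$-sphere linking $\ell$ once. The rank part is the crux: construct, by adapting Kaufman's singular-map technique, a map $k\in C^1(\overline{\bbbb^5},\overline{\bbbb^4})$ with $\rank Dk\le3$ everywhere, with $k(\partial\bbbb^5)\subset\Sph^3$, and with $k|_{\partial\bbbb^5}\colon\Sph^4\to\Sph^3$ representing the nontrivial class $\Sigma\eta$ (the suspension of the Hopf map), which generates $\pi_4(\Sph^3)=\bbbz/2$. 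The difficulty is not the boundary map itself (any map into $\Sph^3$ has rank $\le 3$) but its extension over the $5$-ball into $\overline{\bbbb^4}$ with rank $\le3$: since $\Sigma\eta$ bounds in $\overline{\bbbb^4}$ but not in $\Sph^3$, the image of the extension must fill a genuinely $4$-dimensional region of $\overline{\bbbb^4}$, which is exactly the Kaufman phenomenon of a rank-$(d-1)$ $C^1$ map sweeping out a $d$-dimensional image. Then $f:=\psi\circ k$ is $C^1$ with $\rank Df\le\rank Dk\le3$, $f(\partial\bbbb^5)=\psi(\Sph^3)$ avoids $\ell$, and the linking map sends $f|_{\partial\bbbb^5}$ to $k|_{\partial\bbbb^5}=\Sigma\eta$, so $\lambda(f|_{\partial\bbbb^5})\ne0$ and $f$ is not a uniform limit of $C^2$ rank-$\le3$ maps on $\bbbb^5$. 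Finally, to defeat the local Conjecture~\ref{con2}, one glues rescaled, pairwise disjoint copies of the gadget $(k,\psi,\ell)$ on balls $\bbbb^5(x_j,r_j)$ with $x_j\to x_0$ and $r_j\to0$ fast into a smooth rank-$\le3$ background, arranged (e.g.\ by making each copy radially constant near its bounding sphere) so that the assembly is globally $C^1$ with $\rank Df\le3$; every neighbourhood of $x_0$ contains some copy, and restricting a hypothetical approximation to that copy contradicts the previous step.

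The main obstacle is the construction of $k$: holding $\rank Dk\le3$ throughout the $5$-dimensional domain while the image sweeps out a $4$-dimensional set realizing the nontrivial class on $\Sph^4$. This is the delicate Kaufman-type estimate and the technical heart of the matter; the rigidity statement (Federer's measure bound plus the short homotopy argument) and the gluing and localization are comparatively routine.
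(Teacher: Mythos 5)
You read the statement the right way round: it is to be \emph{disproved}, and your mechanism is the same as the paper's. Both arguments rest on (i) a $C^1$ map with $\rank\le 3$ whose restriction to a sphere represents a nontrivial homotopy class, (ii) a Sard-type rigidity statement showing that a $C^2$ map with $\rank\le 3$ cannot have such boundary behaviour, because its image is too small in measure, and (iii) a multiscale arrangement so that the obstruction appears in every neighbourhood of a fixed point. Your packaging differs in two ways: you phrase the obstruction as a linking class in $\pi_4(\Sph^3)$ relative to a line $\ell\subset\bbbr^5$ and invoke Federer's Hausdorff-measure refinement of Sard ($\mathcal{H}^4$-nullity of the image of a $C^2$ rank-$\le3$ map on a $5$-dimensional domain, so $\ell$ can be slid off the image), whereas the paper projects the approximant to $\bbbr^{m+1}$, argues that closeness to the non-nullhomotopic boundary map forces the image to contain a fixed cube, and contradicts the classical Sard theorem; and you localize by gluing shrinking copies of the gadget at points accumulating at $x_0$, whereas the paper's map $F$ is self-similar along a Cantor set (Lemma~\ref{GH}), so the copies are already built in at every scale. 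Your rigidity half is sound as sketched (the translation-and-isotopy argument for avoiding an $\mathcal{H}^4$-null compact set, plus stability of the linking class under $C^0$-small perturbations, works).

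The genuine gap is exactly the step you defer: the existence of $k\in C^1(\overline{\bbbb}^{5},\overline{\bbbb}^{4})$ with $\rank Dk\le 3$ everywhere and $k|_{\partial\bbbb^5}$ representing $\Sigma\eta\neq0$ in $\pi_4(\Sph^3)$. Saying this can be done ``by adapting Kaufman's singular-map technique'' is not a proof: Kaufman's construction produces rank-deficient $C^1$ maps with large image but gives no control whatsoever of the homotopy class of the boundary restriction, and reconciling the everywhere-rank bound with prescribed nontrivial boundary topology is precisely the hard content. That is the subject of \cite[Lemma~5.1]{GH_Sard} (Lemma~\ref{GH} above), whose proof is a lengthy iterative, self-similar construction; it is also where the dimensional constraint $m+1\le k<2m-1$ comes from (one needs the boundary class to desuspend, via Freudenthal, so that the map can be assembled from suspensions of a map $\Sph^{k-1}\to\Sph^{m-1}$ on spheres around an axis). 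Your case $k=4$, $m=3$ is admissible, but without this construction the proposal is a blueprint, not a proof. A secondary, fixable point: the gluing hint ``radially constant near its bounding sphere'' does not by itself let a copy terminate into a background, since the copy's boundary values form a nonconstant map onto a $3$-sphere and radially coning them to a constant produces rank $4$; to cap the boundary class off while keeping $\rank\le3$ you must again use a gadget-type map (e.g.\ a reversed, reparametrized copy on an annulus, with vanishing radial derivatives at the interfaces and target scales shrinking fast enough that the assembly is $C^1$ at $x_0$). This is routine once $k$ is available, but it is another place where the gadget, not generic smoothing, carries the argument.
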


The following result is easy to prove and it shows that Conjecture~\ref{con2} is true on an open and dense subset of $\Omega$.
\begin{theorem}
\label{thm:easy}
Let $1\leq m<n$ be integers and let $\Omega\subset\bbbr^n$ be open.
If $f\in C^1(\Omega,\bbbr^n)$ satisfies $\rank Df\leq m$ everywhere in $\Omega$, then there is an open and dense set $G\subset\Omega$ such that
for every point $x\in G$ there is 
a neighborhood $\bbbb^n(x,\eps)\subset G$ and a sequence $f_i\in C^\infty(\bbbb^n(x,\eps),\bbbr^n)$ such that $\rank Df_i\leq m$ and $f_i$ converges to $f$ 
uniformly on $\bbbb^n(x,\eps)$.
\end{theorem}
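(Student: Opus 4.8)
The plan is to take
\[
G:=\{\,x\in\Omega:\ \rank Df\ \text{is constant on a neighborhood of }x\,\},
\]
which is open directly from its definition; the work is to prove density and then to perform the approximation on balls inside $G$. For density, fix any ball $\bbbb\subset\Omega$. Since $\rank Df$ takes only the integer values $0,1,\dots,m$, it attains its maximum $k$ over $\bbbb$, say at $y_0$; and by upper semicontinuity of the rank (some $k\times k$ minor of the continuous matrix field $Df$ is nonzero at $y_0$, hence on a neighborhood of $y_0$) the set $\{y\in\bbbb:\rank Df(y)\ge k\}$ is a nonempty open subset of $\bbbb$ on which $\rank Df\equiv k$. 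Thus it is a nonempty open subset of $G\cap\bbbb$, so $G$ is dense.

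Next fix $x_0\in G$, so that $\rank Df\equiv k$ on some ball about $x_0$ with $0\le k\le m$. The key local fact I would establish is a factorization $f=F\circ g$ through $\bbbr^k$ near $x_0$, with $g\in C^1$ and $F\in C^1$. After composing with linear isomorphisms of $\bbbr^n$ in the domain and target we may assume the upper-left $k\times k$ block of $Df(x_0)$ is invertible; writing points as $x=(x',x'')\in\bbbr^k\times\bbbr^{n-k}$ and $f=(f_1,\dots,f_n)$, put $g:=(f_1,\dots,f_k)$ and $\Phi:=(g,x'')$. Then $D\Phi(x_0)$ is invertible (its determinant is that of the $k\times k$ block), so the $C^1$ inverse function theorem makes $\Phi$ a $C^1$ diffeomorphism from a neighborhood $U_0$ of $x_0$ onto an open set, which I may shrink so that the image contains a box $Q'\times Q''\subset\bbbr^k\times\bbbr^{n-k}$. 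Writing $f\circ\Phi^{-1}(u',x'')=(u',\widetilde h(u',x''))$ and computing the rank of the block-triangular derivative $D(f\circ\Phi^{-1})$ --- whose first $k$ rows are already linearly independent --- the hypothesis $\rank Df\equiv k$ forces $\partial_{x''}\widetilde h\equiv 0$ on $Q'\times Q''$; since $Q''$ is connected, $\widetilde h(u',x'')=\widetilde h_0(u')$ for some $\widetilde h_0\in C^1(Q',\bbbr^{n-k})$, and therefore $f=F\circ g$ on $\Phi^{-1}(Q'\times Q'')$ with $F(u'):=(u',\widetilde h_0(u'))\in C^1(Q',\bbbr^n)$ and $g(\Phi^{-1}(Q'\times Q''))\subset Q'$. (Equivalently one may simply invoke the constant-rank theorem.) This factorization is the step I expect to be the main obstacle, even though it is classical; note that the point of routing $f$ through $\bbbr^k$ is that any composition of a smooth map with a smooth map into $\bbbr^k$ automatically has derivative of rank $\le k$, a feature that naive mollification of $f$ destroys.

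The approximation is then routine. Pick $\eps>0$ so small that $\overline{\bbbb^n(x_0,\eps)}\subset\Phi^{-1}(Q'\times Q'')\cap G$, and choose a compact convex set $K'$ with $g\big(\overline{\bbbb^n(x_0,\eps)}\big)\subset\operatorname{int}K'\subset K'\subset Q'$ (possible because $Q'$ is convex). By mollification take $g_i\in C^\infty$ with $g_i\to g$ uniformly on $\overline{\bbbb^n(x_0,\eps)}$ and $F_i\in C^\infty$ with $F_i\to F$ uniformly on $K'$; for $i$ large, $g_i\big(\overline{\bbbb^n(x_0,\eps)}\big)\subset K'$, so $f_i:=F_i\circ g_i$ is a well-defined $C^\infty$ map on $\bbbb^n(x_0,\eps)$. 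Since $Df_i=DF_i(g_i)\,Dg_i$ and $Dg_i$ has only $k$ rows, $\rank Df_i\le k\le m$ everywhere. Finally, using that $F$ is Lipschitz on the convex compact set $K'$,
\[
|f_i-f|\le\|F_i-F\|_{C^0(K')}+\Big(\sup_{K'}\|DF\|\Big)\,|g_i-g|\longrightarrow 0
\]
uniformly on $\bbbb^n(x_0,\eps)$, which completes the proof.
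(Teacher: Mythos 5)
Your proof is correct and follows essentially the same route as the paper: an open dense set on which $\rank Df$ is locally constant (the paper's $G$, defined via local maxima of the rank, coincides with yours, and its density is proved by the same finite-values-plus-semicontinuity argument), then a constant-rank factorization of $f$ through a rank-$k$ bottleneck whose factors are mollified --- the paper cites the Rank Theorem to write $f=\Psi^{-1}\circ\pi_k\circ\Phi$ and mollifies the diffeomorphisms $\Phi$ and $\Psi^{-1}$, while you prove the needed factorization $f=F\circ g$ through $\bbbr^k$ by hand (a self-contained half of the Rank Theorem, as you note) and mollify $F$ and $g$, with a more explicit uniform-convergence estimate for the composition. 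One cosmetic slip: the fact you use for density (a nonzero $k\times k$ minor at $y_0$ remains nonzero nearby, i.e.\ $\{\rank Df\ge k\}$ is open) is lower, not upper, semicontinuity of the rank; the argument itself is exactly the paper's.
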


However, in general Conjecture~\ref{con2} (and hence Conjecture~\ref{con1}) is false and the main result of the paper
provides a family counterexamples for certain ranges of 
$n$ and $m$.  

\begin{theorem}
\label{thm:main}
Suppose that $m+1\leq k<2m-1$, $\ell\geq k+1$, $r\geq m+1$, and the homotopy group $\pi_k(\Sph^m)$ is non-trivial.
Then there is a map $f\in C^1(\bbbr^\ell, \bbbr^r)$ with $\rank Df\leq m $ in $\bbbr^\ell$ and a Cantor set $E\subset \bbbr^\ell$ with the following property:
\begin{quote}
For every $x_o\in E$ and $\eps>0$ there is $\delta>0$ such that\\
if ${g\in C^{k-m+1}(\bbbb^\ell(x_o,\eps),\bbbr^r)}$ and
$|f(x)-g(x)|<\delta \text{ for all } x\in\bbbb^\ell(x_o,\eps)$,\\
then $\rank Dg\geq m+1$ on a non-empty open set in $\bbbb^\ell(x_o,\eps)$.
\end{quote}
\end{theorem}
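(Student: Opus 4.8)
The plan is to construct $f$ and the Cantor set $E$ by hand, and then to deduce the stated property from the Morse--Sard--Federer theorem for mappings of small rank. Fix a smooth map $\phi\colon\Sph^k\to\Sph^m$ representing a non-trivial element of $\pi_k(\Sph^m)$; since $k>m$, automatically $\rank D\phi\le m$. The geometric core of the argument is a \emph{$C^1$ filling lemma}: there is a $C^1$ map $F$ from the closed unit $(k+1)$-ball into $\bbbr^{m+1}$ with $\rank DF\le m$ everywhere, such that $F|_{\Sph^k}$ represents $\phi$ as a map onto $\Sph^m=\partial\bbbb^{m+1}$, and $F$ is surjective onto $\bbbb^{m+1}$ (so in particular it takes the value $0$, the centre of $\Sph^m$). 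Here the ``$C^1$ space-filling'' phenomenon is essential --- compare the Whitney and Kaufman examples of $C^1$ maps of small rank with large image: by Morse--Sard--Federer a map of rank $\le m$ on a $(k+1)$-manifold of class $C^{k-m+1}$ has image of $\mathcal H^{m+1}$-measure zero, whereas in class $C^1$ it may be onto. This step is also where the hypotheses $m+1\le k<2m-1$ and $\pi_k(\Sph^m)\ne0$ come in.

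Next I would globalise. Fix a $(k+1)$-dimensional affine subspace $\Pi\subset\bbbr^\ell$ (possible since $\ell\ge k+1$) and, by a self-similar Cantor scheme supported on $\Pi$, glue countably many affinely rescaled and translated copies of $F$ --- nested at every sufficiently small scale around the points of a Cantor set $E\subset\Pi$ --- into a single map $f_0\colon\Pi\to\bbbr^{m+1}\subset\bbbr^r$. With the scale ratios decaying fast enough, $f_0$ is $C^1$, $Df_0=0$ on $E$, and $\rank Df_0\le m$; and, crucially, the scheme is set up so that for \emph{every} ball $B'$ centred at a point of $E$ the associated family of ``centres'' --- the centres $p_{x,\rho}\in\bbbr^{m+1}$ of the shrinking image spheres $\Sph^m_{x,\rho}$ coming from the copies of $F$ sitting at $x\in E\cap B'$ and scale $\rho<\diam B'$ --- fills a set of positive $\mathcal H^{m+1}$-measure. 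Finally set $f:=f_0\circ\pi_\Pi$, where $\pi_\Pi\colon\bbbr^\ell\to\Pi$ is the orthogonal projection; then $f\in C^1(\bbbr^\ell,\bbbr^r)$, $\rank Df\le\rank Df_0\le m$, and $E\subset\Pi\subset\bbbr^\ell$ is the Cantor set of the theorem.

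For the obstruction step, fix $x_o\in E$ and $\eps>0$, suppose $g\in C^{k-m+1}(\bbbb^\ell(x_o,\eps),\bbbr^r)$ with $|f-g|<\delta$ on $\bbbb^\ell(x_o,\eps)$, and assume for contradiction that $\rank Dg\le m$ on all of $\bbbb^\ell(x_o,\eps)$. Restrict attention to $W:=\Pi\cap\bbbb^\ell(x_o,\eps)$, an open $(k+1)$-ball on which $f=f_0$ and $\rank D(g|_\Pi)\le m$. If $\delta$ is small enough (depending on $x_o,\eps$), enough building blocks survive: for points $x\in E$ near $x_o$ and scales $\rho$ with $\delta\ll\rho<\eps$, the construction provides a round $k$-sphere $S_{x,\rho}=\partial B_{x,\rho}$ bounding a $(k+1)$-ball $B_{x,\rho}\subset W$, on which $f_0$ maps $S_{x,\rho}$ onto a round $m$-sphere $\Sph^m_{x,\rho}$ of radius $\asymp\rho$ lying in a fixed-direction $(m+1)$-plane $V\subset\bbbr^r$, centred at $p_{x,\rho}$, and representing $\phi$; put $P_{x,\rho}:=p_{x,\rho}+V^\perp$, an affine subspace of codimension $m+1$ in $\bbbr^r$ (using $r\ge m+1$). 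Since $\dist(f_0(S_{x,\rho}),P_{x,\rho})\asymp\rho>\delta$, the straight-line homotopy from $f_0|_{S_{x,\rho}}$ to $g|_{S_{x,\rho}}$ avoids $P_{x,\rho}$, so $g|_{S_{x,\rho}}$ is homotopic in $\bbbr^r\setminus P_{x,\rho}$ to a representative of $\phi$; as $\bbbr^r\setminus P_{x,\rho}$ deformation retracts onto an $m$-sphere, this class is non-trivial, so $g|_{B_{x,\rho}}$ must hit $P_{x,\rho}$ (otherwise it would be a null-homotopy of $g|_{S_{x,\rho}}$ in $\bbbr^r\setminus P_{x,\rho}$): pick $q_{x,\rho}\in g(B_{x,\rho})\cap P_{x,\rho}$. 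The orthogonal projection onto $V$ sends $q_{x,\rho}$ to $p_{x,\rho}$ and is $1$-Lipschitz, and all the $q_{x,\rho}$ lie in $g(W)$, hence $\mathcal H^{m+1}(g(W))\ge\mathcal H^{m+1}(\{p_{x,\rho}\})$, which is positive once $\delta$ is small by the previous paragraph. On the other hand $g|_\Pi$ is a $C^{k-m+1}$ map of rank $\le m$ on the $(k+1)$-manifold $W$, and $m+\tfrac{(k+1)-m}{k-m+1}=m+1$, so Morse--Sard--Federer gives $\mathcal H^{m+1}(g(W))=0$ --- a contradiction. Hence $\rank Dg\ge m+1$ at some point of $\bbbb^\ell(x_o,\eps)$, and therefore on a non-empty open subset, since $x\mapsto\rank Dg(x)$ is lower semicontinuous.

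The main obstacle is the construction: building the single $C^1$, rank-$\le m$ filling $F$ with the prescribed homotopically non-trivial boundary and full image, and then assembling infinitely many rescaled copies into a globally $C^1$ map whose family of centres genuinely carries positive $\mathcal H^{m+1}$-measure --- all while preserving the rank bound, the homotopy type of each $\phi$-layer, and control of the $C^1$ norms across the scales; this is also where $k<2m-1$ and $\pi_k(\Sph^m)\ne0$ are used. Once $f$ is available, the obstruction step is comparatively soft: it is just Morse--Sard--Federer at the borderline exponent $m+((k+1)-m)/(k-m+1)=m+1$, together with the elementary fact that $\bbbr^r$ minus an affine subspace of codimension $m+1$ has the homotopy type of $\Sph^m$, so that $\pi_k$ of that complement is $\pi_k(\Sph^m)\ne0$.
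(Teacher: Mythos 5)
Your overall strategy -- a $C^1$, rank-$\le m$ filling $F$ of a non-trivial class in $\pi_k(\Sph^m)$, a self-similar Cantor-type assembly of rescaled copies of $F$, extension to $\bbbr^\ell$ by precomposing with the projection onto a $(k+1)$-plane, and a Sard-type measure contradiction at the borderline exponent -- is essentially the route the paper takes (the paper outsources the hard construction of $F$, together with the self-similarity you need, to Lemma~5.1 of \cite{GH_Sard}, and uses classical Sard after projecting to $\bbbr^{m+1}$ rather than the Federer--Hausdorff version). However, the quantitative heart of your obstruction step fails as written. You require the construction to be arranged so that the centres $p_{x,\rho}$ of the image spheres of the blocks inside a ball $B'$ ``fill a set of positive $\mathcal H^{m+1}$-measure'', and you conclude $\mathcal H^{m+1}(g(W))\ge\mathcal H^{m+1}(\{p_{x,\rho}\})>0$. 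This is impossible: the blocks of a self-similar Cantor scheme form a countable family, so the set of centres is countable and has $\mathcal H^{m+1}$-measure zero; worse, for a fixed $\delta$ your linking argument applies only to blocks whose image spheres have radius $\gtrsim\delta$, of which there are finitely many, so the portion of $g(W)$ you actually produce is a finite set. Hence no contradiction with Morse--Sard--Federer follows from the argument as stated.

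The missing idea is that a \emph{single} block already forces an entire $(m+1)$-dimensional set into (the projection of) the image of $g$: your linking argument works not only for the centre $p_{x,\rho}$ but for every point $p$ of the $(m+1)$-disc bounded by $\Sph^m_{x,\rho}$ inside $p_{x,\rho}+V$ whose distance from $\Sph^m_{x,\rho}$ exceeds $\delta$ -- the straight-line homotopy from $f_0|_{S_{x,\rho}}$ to $g|_{S_{x,\rho}}$ then avoids $p+V^\perp$, so $g(B_{x,\rho})$ must meet $p+V^\perp$. Sweeping $p$ over that disc shows that $\pi_V(g(B_{x,\rho}))$ contains a full $(m+1)$-disc of radius comparable to the image-sphere radius, so one block with image-sphere radius $>2\delta$ suffices, and $\delta$ can be chosen depending only on $x_o$ and $\eps$ (via the chosen block), independently of $g$. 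This is exactly the shape of the paper's argument: it rescales one self-similar block back to the unit ball, observes that any uniform approximant of $F$ whose boundary values stay close to the non-nullhomotopic map $F|_{\partial\bbbb^{k+1}}:\Sph^k\to\partial\bbbi$ must cover the concentric cube $\tfrac12\bbbi$ (otherwise a projection onto $\partial\bbbi$ would null-homotope that boundary map), and then invokes Sard for $C^{k-m+1}$ maps to get the contradiction. With this repair -- and granting the genuinely hard construction of the self-similar $F$, which both you and the paper treat as the main input -- your proof goes through.
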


(Here by a Cantor set we mean a set that is homeomorphic to the ternary Cantor set.)

Therefore the mapping $f$ cannot be approximated in the supremum norm by $C^{k-m+1}$ mappings with rank of the derivative $\leq m$
in any neighborhood of any point of the set $E$.

\begin{remark}
In fact, the mapping $f$  constructed in the proof of Theorem~\ref{thm:main} is $C^\infty$ smooth on $\bbbr^\ell\setminus E$, so $G=\bbbr^\ell\setminus E$ is an open and dense 
set where we can approximate $f$ smoothly, cf.\ Theorem~\ref{thm:easy}.
\end{remark}

Since the assumptions of the theorem are quite complicated, let us show explicit situations when the approximation cannot hold.

\begin{example}
\label{jeden}
If $n\geq 3$, $\ell\geq n+2$ and $r\geq n+1$, then there is $f\in C^1(\bbbr^\ell,\bbbr^r)$ with $\rank Df\leq n$ in $\bbbr^\ell$ that cannot
be locally approximated in the supremum norm by mappings $g\in C^2(\bbbr^\ell,\bbbr^r)$ satisfying $\rank Dg\leq n$.

Indeed, if $n\geq 3$, $k=n+1$ and $m=n$, then $\pi_k(\Sph^m)=\bbbz_2$ (see \cite{hatcher}) and $m+1\leq k<2m-1$.

In particular, there is $f\in C^1(\bbbr^5,\bbbr^5)$ with $\rank Df\leq 3$ that cannot be locally approximated in the supremum norm by mappings
$g\in C^2(\bbbr^5,\bbbr^5)$ satisfying $\rank Dg\leq 3$.
\end{example}
\begin{example}
$\pi_6(\Sph^4)=\bbbz_2$, $k=6$, $m=4$, $m+1\leq k<2m-1$, so there is $f\in C^1(\bbbr^7,\bbbr^7)$, $\rank Df\leq 4$, that cannot be locally approximated by mappings
$g\in C^3(\bbbr^7,\bbbr^7)$ satisfying $\rank Dg\leq 4$.
\end{example}
\begin{example}
$\pi_8(\Sph^5)=\bbbz_{24}$, $k=8$, $m=5$, $m+1\leq k<2m-1$, so there is $f\in C^1(\bbbr^9,\bbbr^9)$, $\rank Df\leq 5$, that cannot be locally approximated by
mappings $g\in C^4(\bbbr^9,\bbbr^9)$ satisfying $\rank Dg\leq 5$.
\end{example}

Infinitely many essentially different situations when the assumptions of Theorem~\ref{thm:main} are satisfied can be easily obtained by examining the catalogue of homotopy groups of spheres.

While, in general, Gałęski's conjecture is not true, Theorem~\ref{thm:main} covers only a certain range of dimensions and ranks, leaving other cases unsolved. We believe that
the following special case of the conjecture is true.

\begin{conjecture}
If $f\in C^1(\bbbr^n,\bbbr^k)$, $n,k\geq 2$, satisfies $\rank Df\leq 1$, then $f$ can be uniformly
approximated (at least locally) by mappings $g\in C^\infty(\bbbr^n,\bbbr^k)$ satisfying $\rank Dg\leq 1$.
\end{conjecture}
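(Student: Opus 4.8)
The plan is to exploit the rigid structure of rank-$\le1$ maps: such a map is (locally) a composition $\gamma\circ u$ of a scalar function $u$ with a curve $\gamma$, and this form survives mollification of both factors. Concretely, suppose that on a ball $B$ we can write $f=\gamma\circ u$ with $u\in C(B,\bbbr)$ scalar and $\gamma\in C(J,\bbbr^k)$ a curve on an open interval $J\supset\overline{u(B)}$. Mollifying, choose $u_i\in C^\infty(B,\bbbr)$ and $\gamma_i\in C^\infty(J_i,\bbbr^k)$ with $u_i\to u$, $\gamma_i\to\gamma$ uniformly, where $\overline{u(B)}\subset J_i\subset J$. Then for $i$ large $u_i(B)\subset J_i$, so $g_i:=\gamma_i\circ u_i\in C^\infty(B,\bbbr^k)$ is defined, $Dg_i(x)=\gamma_i'(u_i(x))\otimes\nabla u_i(x)$ is an outer product, hence $\rank Dg_i\le1$, and $\|g_i-f\|_{\infty,B}\le\|\gamma_i-\gamma\|_\infty+\omega_\gamma(\|u_i-u\|_\infty)\to0$, where $\omega_\gamma$ is a modulus of continuity of $\gamma$. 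In fact even an \emph{approximate} factorization $\|f-\gamma\circ u\|_{\infty,B}<\eps$ suffices, by the same device. (The factorization structure is essential here: direct mollification $f*\rho_\eps$ fails, since $D(f*\rho_\eps)=(Df)*\rho_\eps$ is an average of rank-$\le1$ matrices and typically has higher rank.) So the conjecture reduces to the local statement: \emph{near every point, $f$ is, exactly or approximately, a composition of a curve with a scalar function.}

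\textbf{The nondegenerate case.} If $Df(x_o)\neq0$ this is elementary. By continuity $\rank Df\equiv1$ on a small ball $B\ni x_o$, so the component gradients $\nabla f_1,\dots,\nabla f_k$ are pairwise proportional there. Pick $i$ with $\nabla f_i(x_o)\neq0$ and shrink $B$ so that $\nabla f_i\neq0$ on $B$ and each nonempty slice $\{f_i=t\}\cap B$ is connected (possible by the $C^1$ implicit function theorem). Writing $\nabla f_j=c_j\nabla f_i$ with $c_j\in C(B)$, the derivative of $f_j$ tangent to the $C^1$ hypersurface $\{f_i=t\}\cap B$ equals $c_j$ times that of $f_i$, which is zero; hence $f_j$ is constant on that slice, say $f_j\equiv\gamma_j(t)$ there. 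Thus $f=\gamma\circ u$ with $u:=f_i$, and taking a $C^1$ curve $\sigma$ transverse to the slices with $f_i(\sigma(t))=t$ shows $\gamma(t)=f(\sigma(t))\in C^1$. This already yields local smooth rank-$\le1$ approximation at every point of $\bbbr^n\setminus Z$, where $Z:=\{Df=0\}$, and on $\operatorname{int}Z$ the map $f$ is locally constant; cf.\ Theorem~\ref{thm:easy}.

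\textbf{The main obstacle: points of $Z$.} The heart of the conjecture is the case $x_o\in Z$. There one must glue the factorizations living on the various connected components of $B\setminus Z$ into \emph{one} scalar function $u$ and \emph{one} curve $\gamma$ on a small ball $B=\bbbb^n(x_o,\delta)$, across $Z$, on which $f$ is ``flat'' ($Df\to0$) but need not be locally constant. This is genuinely delicate: the components of $B\setminus Z$ carry a priori unrelated parametrizations, and $f(B)$ may be a complicated one-dimensional continuum. I would try to (i) control the topology of $f(B)$ near $f(x_o)$, aiming to show it is locally connected of topological dimension $\le1$, so that by the Hahn--Mazurkiewicz theorem it is the image of a continuous $\gamma$, and then (ii) build $u(x)$ as a continuous selection from $\gamma^{-1}(f(x))$ compatible with the factorizations on $B\setminus Z$, the crux being continuity of $u$ across $Z$ together with the identity $f=\gamma\circ u$; the relaxation to approximate factorization should help here. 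A second possible route is induction on $n$: off $Z$, the level sets of a suitable component $f_i$ give a $C^1$ foliation on whose leaves $f$ is constant, and one could try to organize these leaves globally on $B$ and pass to the (one-dimensional) leaf space — but the behavior of this foliation as one approaches $Z$, which can be a nowhere dense set of arbitrarily bad topology (e.g.\ a positive-measure Cantor set), is precisely where the real work lies. Modulo this local factorization across the zero set of $Df$, the conjecture follows from the mollification step above.
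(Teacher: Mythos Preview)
The statement you are attempting to prove is labeled a \emph{Conjecture} in the paper, and the paper does \emph{not} prove it. The authors only offer a one-sentence heuristic: on the set where $\rank Df=1$ the map is a $C^1$ curve that ``branches'' on the set where $\rank Df=0$. Your proposal recovers exactly this intuition in a more precise form---the local factorization $f=\gamma\circ u$ and its stability under mollification---and your treatment of the nondegenerate case $Df(x_o)\neq 0$ is essentially the Rank Theorem argument behind Theorem~\ref{thm:easy}. So on the part of the problem that the paper actually addresses, you and the paper agree.

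The genuine gap in your proposal is the one you yourself flag: the case $x_o\in Z=\{Df=0\}$. You do not construct the single scalar $u$ and single curve $\gamma$ (exactly or approximately) across $Z$; you only sketch two strategies (Hahn--Mazurkiewicz on $f(B)$ plus a continuous selection, or an inductive foliation argument) and note that both founder precisely on the behavior near $Z$. Neither strategy is carried out, and there is real content missing: the image $f(B)$ need not be a simple arc, the components of $B\setminus Z$ may carry incompatible parametrizations, and a continuous selection from $\gamma^{-1}(f(x))$ is not guaranteed. This is not a defect relative to the paper---the paper leaves this open too---but it means your write-up is a plan of attack, not a proof. If you can complete the local (even approximate) factorization at points of $Z$, the mollification step you describe would indeed finish the argument; until then the conjecture remains open.
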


Our belief is based on the fact that in that case the structure of the mapping $f$ is particularly simple: on the open set where
$\rank Df=1$, it is a $C^1$ curve that branches on the set where $\rank Df=0$.

\section{Proof of Theorem~\ref{thm:easy}}
Let $G\subset\Omega$ be the set of points where the function $x\mapsto \rank Df(x)$ attains a local maximum i.e.,
$$
G=\{x\in\Omega:\, \exists \eps>0\ \forall y\in\bbbb^n(x,\eps)\  \rank Df (y)\leq \rank Df(x)\}.
$$
We claim that the set $G$ is open, and that $\rank Df$ is locally constant in $G$. 
Indeed, the set $\{\rank Df\geq k\}$ is open so if $x\in G$ and $\rank Df(x)=k$, then $\rank Df(y)\geq k$ in a neighborhood $\bbbb^n(x,\eps)$ of $x$, but $\rank Df$ attains a local
maximum at $x$, so $\rank Df(y)=k$ in $\bbbb^n(x,\eps)$. Clearly, $\bbbb^n(x,\eps)\subset G$ and
$\rank Df$ is constant in the neighborhood $\bbbb^n(x,\eps)\subset G$.

We also claim that the set $G\subset\Omega$ is dense. Let $x\in \Omega$ and $\bbbb^n(x,\eps)\subset\Omega$. Since $\rank Df$ can attain only a finite number of values,
it attains a local maximum at some point $y\in \bbbb^n(x,\eps)$. Clearly, $y\in G$. That proves density of $G$.

It remains to prove now that $f$ can be locally approximated in $G$. Let $x\in G$. Then $\rank Df(x)=k\leq m$. Since $\rank Df$ is constant in a neighborhood of $x$, it follows from
the Rank Theorem \cite[Theorem~8.6.2/2]{zorich} that there are diffeomorphisms $\Phi$ and $\Psi$ defined in neighborhoods of $x$ and $f(x)$ respectively such that
$\Phi(x)=0$, $\Psi(f(x))=0$, and
$$
\Psi\circ f\circ\Phi^{-1}(x_1,\ldots,x_n)=(x_1,\ldots,x_k,0,\ldots,0)
\quad
\text{in a neighborhood of $0\in\bbbr^n$.}
$$ 
Let $\pi_k:\bbbr^n\to\bbbr^n$, $\pi_k(x_1,\ldots,x_n)=(x_1,\ldots,x_k,0,\ldots,0)$. Then $\Psi\circ f\circ\Phi^{-1}=\pi_k$, so
$f=\Psi^{-1}\circ\pi_k\circ\Phi$ in a neighborhood of $x$. If $\Phi_\eps$ and $(\Psi^{-1})_\eps$ are smooth approximations by mollification, then 
$f_\eps=(\Psi^{-1})_\eps\circ\pi_k\circ\Phi_\eps$ is $C^\infty$ smooth and it converges uniformly to $f$ in a neighborhood of $x$ as $\eps\to 0$. Clearly,
$\rank Df_\eps\leq k$ by the chain rule, since $\rank D\pi_k=k$.
\hfill $\Box$

\begin{remark}
It is easy to see that in fact $\rank f_\eps=k$ in a neighborhood of $x$, provided $\eps$ is sufficiently small. Indeed, $\Phi_\eps=\Phi*\varphi_\eps$ (approximation by mollification) so
$D\Phi_\eps=(D\Phi)*\varphi_\eps$. Since $\det(D\Phi(x))\neq 0$, for small $\eps>0$ we also have  that $\det(D\Phi_\eps)(x)\neq 0$ and hence $\Phi_\eps$ is a diffeomorphism near $x$. Similarly, $(\Psi^{-1})_\eps$ is a diffeomorphism near $0$.
\end{remark}

\section{Proof of Theorem~\ref{thm:main}}

In the first step of the proof we shall construct a mapping $F:\bbbb^{k+1}\to\bbbr^{m+1}$
defined on the unit ball $\bbbb^{k+1}=\bbbb^{k+1}(0,1)$,
with the properties announced by Theorem \ref{thm:main}.
\begin{lemma}
\label{lem:fromball}
Suppose that $m+1\leq k<2m-1$ and $\pi_k(\Sph^m)\neq 0$.
Then there exists a map $F\in C^1(\bbbb^{k+1},\bbbr^{m+1})$ with $\rank DF\leq m$ in $\bbbb^{k+1}$ and a Cantor set $E_F\subset \bbbb^{k+1}$ such that for every $x_o\in E_F$ and $1-|x_o|>\eps>0$ there is $\delta>0$ with the following property:\\ if $G\in C^{k-m+1}(\bbbb^{k+1}(x_o,\eps),\bbbr^{m+1})$ satisfies $|F(x)-G(x)|<\delta$ at all points $x\in \bbbb^{k+1}(x_o,\eps)$,\\ then $\rank DG\geq m+1$ on an open, non-empty set in $\bbbb^{k+1}(x_o,\eps)$.
\end{lemma}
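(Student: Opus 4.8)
The plan is to prove Lemma~\ref{lem:fromball} by a self-similar, Cantor-type construction: the map $F$ will be assembled from rescaled copies of a single ``building block'' placed along the stages of a Cantor construction, and the obstruction to approximation will come from a Sard-type linking argument applied to the prospective approximant $G$.

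I would first pin down the mechanism that forbids approximation, since it dictates what $F$ must do. Fix a nontrivial element of $\pi_k(\Sph^m)$. The construction of $F$ will guarantee that near every $x_o\in E_F$ there are a point $p_o=p_o(x_o)\in\bbbr^{m+1}$ and arbitrarily small radii $\rho\in(0,1-|x_o|)$ for which $F$ maps $\partial\bbbb^{k+1}(x_o,\rho)$ into $\bbbr^{m+1}\setminus\overline{\bbbb^{m+1}(p_o,c_\rho)}$ for some $c_\rho>0$, representing the fixed nontrivial class under the identification $\bbbr^{m+1}\setminus\{p_o\}\simeq\Sph^m$. Granting this, suppose $G\in C^{k-m+1}(\bbbb^{k+1}(x_o,\eps),\bbbr^{m+1})$ satisfies $\rank DG\le m$ everywhere and $|F-G|<\delta$ on $\bbbb^{k+1}(x_o,\eps)$. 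Because $(k+1)-(m+1)+1=k-m+1$, Sard's theorem applies to $G$, and since every point of $\bbbb^{k+1}(x_o,\eps)$ is critical, $G\big(\overline{\bbbb^{k+1}(x_o,\rho)}\big)$ has measure zero in $\bbbr^{m+1}$ for every $\rho<\eps$. Choose such a $\rho<\eps$ from the family above and $\delta<c_\rho/2$. Then the affine homotopy between $F$ and $G$ keeps $\partial\bbbb^{k+1}(x_o,\rho)$ outside $\bbbb^{m+1}(p_o,c_\rho/2)$, so $G|_{\partial\bbbb^{k+1}(x_o,\rho)}$ still represents the nontrivial class in $\pi_k\big(\bbbr^{m+1}\setminus\bbbb^{m+1}(p_o,c_\rho/2)\big)\cong\pi_k(\Sph^m)$; but choosing $p_1\in\bbbb^{m+1}(p_o,c_\rho/2)$ outside the measure-zero set $G(\overline{\bbbb^{k+1}(x_o,\rho)})$ exhibits $G|_{\partial\bbbb^{k+1}(x_o,\rho)}$ as the boundary value of a map $\overline{\bbbb^{k+1}(x_o,\rho)}\to\bbbr^{m+1}\setminus\{p_1\}\simeq\Sph^m$, hence null-homotopic --- a contradiction. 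Thus $\rank DG\ge m+1$ on an open subset, as claimed.

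It remains to construct $F$. The building block is a $C^1$ map $\Phi$ on a closed spherical annulus $A\subset\bbbr^{k+1}$ with $\rank D\Phi\le m$, whose outer-boundary value is a fixed model map and whose inner-boundary value is a disjoint union of several rescaled copies of that model supported in small balls, so that iterating $\Phi$ at all scales yields $F$, a Cantor set $E_F$ of limit points, the behaviour on shrinking spheres described above, and (as in the Remark after Theorem~\ref{thm:main}) smoothness of $F$ off $E_F$. On each annular piece the model has the form $x\mapsto p_o+\sigma(|x|)\,\varphi(x/|x|)$ with $\sigma\ge0$ a $C^1$ amplitude vanishing to first order at the ends of the piece and $\varphi\in C^1(\Sph^k,\Sph^m)$ fixed; a direct computation gives $\rank D(\text{model})\le 1+\rank D\varphi$. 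Everything therefore reduces to producing a map $\varphi\in C^1(\Sph^k,\Sph^m)$ that is \emph{homotopically nontrivial} and yet satisfies $\rank D\varphi\le m-1$ everywhere.

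This rank-deficient realization of a nontrivial homotopy class is the heart of the argument, and it is exactly here that the hypotheses $k<2m-1$ and ``$C^1$ rather than $C^{k-m+1}$'' enter. Since $k\ge m+1$, the Sard threshold $C^{k-m+1}$ lies strictly above $C^1$, so the standard obstruction (a map of measure-zero image is null-homotopic) does not apply in class $C^1$; one may then start from a smooth nontrivial representative and ``fold'' it along its regular set, replacing it on suitable charts by sharpness-of-Sard--type $C^1$ maps of rank $\le m-1$ while preserving its homotopy class, the inequality $k<2m-1$ providing the dimensional room to carry this out and to keep the class under control. I expect the two genuinely delicate steps to be (i) this construction of $\varphi$, and (ii) checking that the infinite assembly is globally $C^1$ and that no point of rank $m+1$ is created either at the interfaces between successive generations or in the limit along $E_F$; granting these, the Cantor bookkeeping and the linking argument above are routine.
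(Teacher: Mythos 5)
Your second half --- the obstruction argument --- is essentially the paper's: assume $\rank DG\le m$ (using that $\{\rank DG\ge m+1\}$ is open), apply Sard's theorem in the class $C^{(k+1)-(m+1)+1}=C^{k-m+1}$ to conclude the image has measure zero, and contradict this with the homotopical nontriviality of $F$ on small spheres near $x_o$, which survives a $C^0$-perturbation. The paper implements this by rescaling via the explicit self-similarity $T_i^{-1}\circ F|_{\overbar{\bbbd}_i}\circ\Sigma_i=F$ and noting that the image of an approximant must contain the cube $\tfrac12\bbbi$; your version with spheres centered at $x_o$ and an omitted point $p_1$ is the same mechanism. Note, however, that the quantitative nontriviality at \emph{all} small scales around \emph{every} point of $E_F$ is exactly what the cited self-similarity statement delivers; in your sketch it is an unproven property of a map you have not constructed.

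The genuine gap is in the construction of $F$. Your cone model $x\mapsto p_o+\sigma(|x|)\,\varphi(x/|x|)$ forces the building block to be a homotopically nontrivial $\varphi\in C^1(\Sph^k,\Sph^m)$ with $\rank D\varphi\le m-1$ \emph{everywhere}, and you offer only a ``folding along the regular set'' heuristic for its existence. This is not a routine step: by the Federer--Sard theorem any $C^{k-m+2}$ map $\Sph^k\to\Sph^m$ of rank $\le m-1$ omits a point and is therefore null-homotopic, so such a $\varphi$ must be irreparably non-smooth and cannot be obtained by modifying a smooth representative chart-by-chart in general position; producing objects of this kind is precisely the hard content of the companion paper \cite{GH_Sard}, not a remark. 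Moreover, what \cite{GH_Sard} actually supplies (and what the present paper uses) is weaker in exactly the way that matters: a $C^1$ map $F:\overbar{\bbbb}^{k+1}\to\bbbi\subset\bbbr^{m+1}$ with $\rank DF\le m$ whose boundary restriction is an ordinary smooth nontrivial map $\phi:\Sph^k\to\partial\bbbi$ (no rank deficiency on the sphere). There the rank bound comes from mapping a neighborhood of each point outside the Cantor set into the $m$-dimensional skeleton of a cubical grid, combined with an axially symmetric suspension of a nontrivial $h:\Sph^{k-1}\to\Sph^{m-1}$ --- this is where $k<2m-1$ enters, via Freudenthal surjectivity of the suspension --- together with a delicate iteration making the limit $C^1$ with $DF\to0$ at $E_F$. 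Your reduction demands the same kind of object one dimension lower ($\Sph^k\to\Sph^m$ with rank $\le m-1$), which by the same mechanism would require roughly $k\le 2m-3$ and is not available in the boundary case $k=2m-2$ of the hypothesis; in particular, for the paper's headline case $m=3$, $k=4$ you would need a nontrivial $C^1$ map $\Sph^4\to\Sph^3$ of rank $\le 2$, which no cited result provides and which rigidity results for low-dimensional targets (of Hopf-invariant type) indicate does not exist. So as written the proposal does not yield the lemma in the stated range, and in particular not in the paper's main example.
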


Before we prove Lemma~\ref{lem:fromball}, let us show how Theorem~\ref{thm:main} follows from it.
To this end, let $\tilde{\bbbb}^{k+1}\subsetneq \bbbb^{k+1}$ be a ball concentric with $\bbbb^{k+1}$, containing the Cantor set $E_F$ and let $\Phi:\bbbb^{k+1}\to\bbbr^{k+1}$ be a diffeomorphism onto $\bbbr^{k+1}$ that is identity on $\tilde{\bbbb}^{k+1}$, so $F\circ\Phi^{-1}:\bbbr^{k+1}\to\bbbr^{m+1}$ coincides with $F$ on $\tilde{\bbbb}^{k+1}$ and hence in a neighborhood of the set $E_F$. Denote the points in $\bbbr^{\ell}$ and $\bbbr^r$ by
$$
(x,y)\in\bbbr^{k+1}\times\bbbr^{\ell-k-1}=\bbbr^\ell
\quad
\text{and}
\quad
(z,v)\in\bbbr^{m+1}\times\bbbr^{r-m-1}=\bbbr^{r}
$$
and let $\pi:\bbbr^r\to\bbbr^{m+1}$, $\pi(z,v)=z$ be the orthogonal projection.

It easily follows that the mapping
$$
\bbbr^{\ell}\ni (x,y)\longmapsto f(x,y):=(F\circ\Phi^{-1}(x),0)\in\bbbr^r
$$
satisfies the claim of Theorem~\ref{thm:main}
with $E=E_F\times\{ 0\}\subset\bbbr^{k+1}\times\bbbr^{\ell-k-1}=\bbbr^\ell$.

Indeed, in a neighborhood of $x_o\in E_F$, $f(x,y)=(F(x),0)$.

Suppose that $g\in C^{k-m+1}(\bbbb^\ell((x_o,0),\eps),\bbbr^r)$ is such that
$$
|f(x,y)-g(x,y)|<\delta
\quad
\text{for all $(x,y)\in\bbbb^\ell((x_o,0),\eps)$.}
$$
Then $G(x)=\pi(g(x,0))\in C^{k-m+1}(\bbbb^{k+1}(x_o,\eps),\bbbr^{m+1})$ satisfies
$$
|F(x)-G(x)|<\delta
\quad
\text{for all $x\in\bbbb^{k+1}(x_o,\eps)$}
$$
provided $\eps>0$ is so small that $f(x,y)=(F(x),0)$ for all $x\in\bbbb^{k+1}(x_o,\eps)$.

Hence $\rank DG\geq m+1$ on an open, non-empty set in $\bbbb^{k+1}(x_o,\eps)$ by Lemma~\ref{lem:fromball}.
Since $\rank Dg(x,0)\geq \rank DG(x)$ and the set $\{\rank Dg\geq m+1\}$ is open, $\rank Dg\geq m+1$ on an
open, non-empty subset of $\bbbb^\ell((x_o,0),\eps)$, which completes the proof of Theorem~\ref{thm:main}.
Therefore it remains to prove Lemma~\ref{lem:fromball}.

\begin{proof}[Proof of Lemma~\ref{lem:fromball}]
Let $\bbbi$ denote the unit cube $[-\frac{1}{2},\frac{1}{2}]^{m+1}$ in $\bbbr^{m+1}$.
Since, by assumption, $\pi_k (\Sph^m)\neq 0$ and $\partial\bbbi$ is homeomorphic to $\Sph^m$, there is a continuous mapping
$\hat{\phi}:\Sph^k\to\partial\bbbi$ that is not homotopic to a constant map.
Approximating $\hat{\phi}$ by standard mollification, we obtain a smooth mapping from $\Sph^k$ to $\R^{m+1}$, uniformly close to $\hat \phi$, with the image lying in a small neighborhood of $\partial \bbbi$.
Then, composing it with a $C^\infty$ smooth mapping $R$ that is homotopic to the identity and
maps a neighborhood of $\partial\bbbi$ onto $\partial\bbbi$ we obtain a mapping ${\phi}:\Sph^k\to\partial\bbbi$ that is not homotopic to a constant map and is
$C^\infty$ smooth as a mapping to $\bbbr^{m+1}$.

\noindent\begin{minipage}{0.67\textwidth}
\setlength{\parindent}{12pt}
 A smooth mapping $R:\bbbr^{m+1}\to\bbbr^{m+1}$ homotopic to the identity, that maps a neighborhood of $\partial\bbbi$ onto $\partial\bbbi$
can be defined by a formula
$$
R(x_1,x_2,\ldots,x_{m+1})=(\lambda_s(x_1),\lambda_s(x_2),\ldots,\lambda_s(x_{m+1})),
$$
where for $s\in (0,\frac{1}{4})$ the function $\lambda_s:\bbbr\to \bbbr$ is smooth, odd, non-decreasing and such that $\lambda_s(t)=t$ when $||t|-\frac{1}{2}|>2s$ and $\lambda(t)=1$ when  $||t|-\frac{1}{2}|<s$, see the graph on the right.
Taking $s\to 0$ gives a homotopy between $R$ and the identity.
\end{minipage}
\begin{minipage}{0.29\textwidth}
\begin{tikzpicture}[scale=0.5]

\draw[->](0,0)--(8,0);
\draw[->](4,-4)--(4,4);
\foreach \i in {-3,-2.5,-2,-1.5,-1}
{
\draw (\i+4,0.1)--(\i+4,-0.1);
\draw (-\i+4,0.1)--(-\i+4,-0.1);
}
\draw[thick, shift={(4,0)}] (-4,-4) -- (-3,-3) to [out=45, in =180] (-2.5,-2) -- (-1.5,-2) to [out=0,in=-135] (-1,-1) -- (1,1) to [out=45,in=180] (1.5,2) -- (2.5,2) to [out=0,in=-135] (3,3)--(4,4);
\draw (3.9,-2)--(4.1,-2);
\draw (3.9,2)--(4.1,2);
\node[left] at (4.2,2) {$\frac{1}{2}$};
\node[left] at (4.2,-2) {$-\frac{1}{2}$};
\node[above] at (6,0) {$\frac{1}{2}$};
\draw[|-|] (5.5,-1)--(6.5,-1);
\node[below] at (6,-1) {$2s$};
\draw[|-] (5,-2.5)--(5.5,-2.5);
\draw[-|] (6.5,-2.5)--(7,-2.5);
\node[below] at (6,-1.9) {$4s$};
\node at (-0.5,0) {};
\end{tikzpicture}
\end{minipage}

Lemma~\ref{lem:fromball} is a simple consequence of the following result proved in
\cite[Lemma~5.1]{GH_Sard}.
(Note that in the statement of Lemma~5.1 in \cite{GH_Sard}, $k$ plays the role of $m$ and $m$ plays the role of $k$.)
The self-similarity property of the mapping $F$ in Lemma~\ref{GH} is explicitly stated in the proof of Lemma~5.1 in
\cite{GH_Sard}.
\begin{lemma}
\label{GH}
Suppose that $m+1\leq k<2m-1$ and $\pi_k(\Sph^m)\neq 0$.
Then there is a mapping $F\in C^1(\overbar{\bbbb}^{k+1},\bbbi)$
satisfying $\rank DF\leq m$ everywhere, such that $F$ maps the boundary $\partial \bbbb^{k+1}=\Sph^k$ to $\partial \bbbi$ and
$F|_{\partial\bbbb^{k+1}}=\phi$, where $\phi$ has been defined above.

Moreover, $F$ is self-similar in the following sense. There is a Cantor set $E_F\subset\bbbb^{k+1}$ such that for every $x_o\in E_F$
there is a sequence of balls $\bbbd_i\subset\bbbb^{k+1}$, $x_o\in \bbbd_i$, with radii convergent to zero, and similarity transformations
$$
\Sigma_i:\overbar{\bbbb}^{k+1}\to \overbar{\bbbd}_i,
\quad
\Sigma_i(\overbar{\bbbb}^{k+1})=\overbar{\bbbd}_i,
\qquad
T_i:\bbbr^{m+1}\to\bbbr^{m+1},
$$
each being a composition of a translation and scaling, such that
$$
T_i^{-1}\circ F|_{\overbar{\bbbd}_i}\circ\Sigma_i=F.
$$
\end{lemma}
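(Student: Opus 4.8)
This is essentially \cite[Lemma~5.1]{GH_Sard}, so a proof amounts to recalling the construction there; let me describe how I would carry it out. The plan is to produce $F$ as a uniform limit $F=\lim_j F_j$ of maps that are smooth off a shrinking family of balls and that reproduce, on countably many smaller balls clustering at a Cantor set $E_F$, an exactly rescaled copy of $F$ itself. The organizing observation is that, since $[\phi]\neq 0$ in $\pi_k(\partial\bbbi)=\pi_k(\Sph^m)$, the map $\phi$ cannot be extended over $\bbbb^{k+1}$ as a map with values in $\partial\bbbi$; hence any continuous extension $F\colon\overbar{\bbbb}^{k+1}\to\bbbi$ of $\phi$ must be onto $\bbbi$ (if it missed an interior point, composing with the deformation retraction of $\bbbi$ minus that point onto $\partial\bbbi$ would contract $\phi$). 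A $C^{k-m+1}$ surjection of the $(k+1)$-ball onto the $(m+1)$-cube with rank of the derivative at most $m$ is excluded by the Morse--Sard theorem for mappings of small rank, so $F$ is forced to be a genuinely fractal $C^1$ object, carried by $E_F$.

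First I would build one generation. Fix finitely many pairwise disjoint closed balls $\overbar{\bbbd}_1,\dots,\overbar{\bbbd}_N\subset\bbbb^{k+1}$ of radii $\rho_i$, together with similarities $\Sigma_i\colon\overbar{\bbbb}^{k+1}\to\overbar{\bbbd}_i$ and target similarities $T_i(z)=\sigma_i z+c_i$ whose cubes $T_i(\bbbi)$ are pairwise disjoint and contained in $\bbbi$. On $\Omega=\bbbb^{k+1}\setminus\bigcup_i\overbar{\bbbd}_i$ I want to extend $\phi$ (on $\partial\bbbb^{k+1}$) and the rescaled data $T_i\circ\phi\circ\Sigma_i^{-1}$ (on each $\partial\bbbd_i$) by a smooth map of rank at most $m$. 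Since $\Omega$ is homotopy equivalent to a wedge of $N$ copies of $\Sph^k$, the pair $(\Omega,\partial\Omega)$ has relative cells only in dimensions $\leq k+1$, and the one non-formal obstruction to extending the boundary data into a model built from $m$-spheres lives in $H^{k+1}(\Omega,\partial\Omega;\pi_k(\text{model}))\cong\pi_k(\text{model})$. This is where the hypothesis $k<2m-1$ enters decisively: $k$ lies below the range of Whitehead products $[\iota_a,\iota_b]\in\pi_{2m-1}$, so by the Hilton--Milnor theorem $\pi_k$ of a wedge of $m$-spheres splits as a direct sum of copies of $\pi_k(\Sph^m)$, and obstruction theory can be carried out one sphere at a time. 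Choosing $N$, and if necessary the orientations of the $\Sigma_i$ and $T_i$, so that the inner and outer homotopy classes are matched in $\pi_k(\Sph^m)$ — for instance $N\equiv 1$ modulo the order of $[\phi]$, which is finite since $\pi_k(\Sph^m)$ is finite for $m<k<2m-1$ — makes the obstruction vanish.

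To keep the rank at most $m$ while still allowing the image to fill $\bbbi$ in the limit, I would not map into a fixed $m$-dimensional set but instead write $F=\Phi+(0,\dots,0,\psi)$, where $\Phi$ takes values in $m$-dimensional polyhedral pieces that change only inside the flat regions of a Cantor-staircase function $\psi\colon\overbar{\bbbb}^{k+1}\to[-\tfrac12,\tfrac12]$, while $\psi$ rises only on thin transition collars on which $\Phi$ is locally constant. Then at each point either $d\psi=0$, so $\rank DF=\rank D\Phi\leq m$, or $D\Phi=0$, so $\rank DF\leq 1\leq m$; and the steps of $\psi$, accumulated over all generations together with the $\Phi$-content, force the iterated image of $F$ to be all of $\bbbi$. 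The recursion is obtained by installing the same block inside each $\bbbd_i$ with boundary datum $T_i\circ\phi\circ\Sigma_i^{-1}$ — which equals $\phi$ after conjugation by $T_i^{-1}$ and $\Sigma_i$ — and by subdividing the flat regions of $\psi$ at finer generations, so that the exact self-similarity $T_i^{-1}\circ F|_{\overbar{\bbbd}_i}\circ\Sigma_i=F$ is built in.

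It remains to check convergence and regularity, and to isolate the hard part. At corresponding points $DF$ on $\bbbd_i$ equals $(\sigma_i/\rho_i)$ times $DF$ on $\bbbb^{k+1}$; disjointness of the $\overbar{\bbbd}_i$ in $\bbbr^{k+1}$ lets us keep $\sum_i\rho_i^{k+1}<1$, disjointness of the $T_i(\bbbi)$ in $\bbbr^{m+1}$ forces $\sum_i\sigma_i^{m+1}\leq 1$, and since $k+1>m+1$ these are compatible with $\sigma_i/\rho_i<1$ for every $i$. Hence the similarity ratios contract, $F_j\to F$ uniformly with uniformly controlled derivatives, $F$ is $C^1$ with $DF=0$ on $E_F$ (so the rank bound holds there too), $F$ is $C^\infty$ off $E_F$, and $E_F$ — the attractor of the iterated function system $\{\Sigma_i\}$ of $N\geq 2$ well-separated contractions — is homeomorphic to the ternary Cantor set. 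The main obstacle is the single-generation block: one must realize the prescribed boundary homotopy data (the only step using $k<2m-1$), keep the rank at most $m$ through the staircase splitting, and do all of this with bounds on $\|DF_j\|$ that are uniform over all generations, since otherwise the uniform limit would fail to be $C^1$ near $E_F$. Reconciling these three requirements, and verifying that the limiting map satisfies the exact self-similarity rather than an approximate one, is the technical core of \cite[Lemma~5.1]{GH_Sard}.
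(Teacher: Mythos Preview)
Your outline departs substantially from the construction the paper (and \cite{GH_Sard}) actually uses, and the substitute mechanisms you propose are not obviously workable. In the paper the condition $k<2m-1$ enters through Freudenthal's suspension theorem: one desuspends to a nontrivial $h\colon\Sph^{k-1}\to\Sph^{m-1}$ and then builds the single-generation map as an explicit composition $R\circ G_2\circ H\circ G_1$, where $H$ sends each horizontal $(k-1)$-sphere in $\bbbb^{k+1}$ to the $(m-1)$-sphere of the same radius via a scaled copy of $h$ (so $H$ restricted to any round $k$-sphere is the suspension of $h$), $G_1,G_2$ are diffeomorphisms rearranging the $N=n^{m+1}$ inner balls, and $R$ projects onto the $m$-skeleton of an $n\times\cdots\times n$ cubical grid in $\bbbi$. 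The rank bound comes for free because the image lies in that $m$-skeleton; no staircase splitting is used, and the choice $N=n^{m+1}$ is dictated by the grid, not by any congruence modulo the order of $[\phi]$.

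Your alternative has two soft spots. First, the obstruction/Hilton--Milnor argument you sketch would at best produce a continuous extension into an $m$-complex; promoting that to a \emph{smooth} map of $\overbar{\bbbb}^{k+1}$ into $\bbbr^{m+1}$ with $\rank\leq m$ and with controlled $C^1$ norm (needed for the iteration) is exactly the content you are trying to establish, and your outline does not indicate how to do it. Second, the decomposition $F=\Phi+(0,\dots,0,\psi)$ with a Cantor staircase $\psi$ is hard to reconcile with the required boundary datum $\phi\colon\Sph^k\to\partial\bbbi$ (which hits all faces, not just top and bottom) and with exact self-similarity under general $T_i(z)=\sigma_i z+c_i$: unless all $c_i$ lie on the last coordinate axis the staircase structure is destroyed by $T_i^{-1}$, while if they do, the cubes $T_i(\bbbi)$ are stacked in a single column and the limit cannot be onto $\bbbi$, contradicting the surjectivity you yourself derived from $[\phi]\neq 0$. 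The paper's skeleton-projection device avoids both difficulties simultaneously.
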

Here the $C^1$ regularity of $F$ means that it is $C^1$ as a mapping into $\bbbr^{m+1}$, with the image being the cube $\bbbi$.

The mappings $T_i$ and $\Sigma_i$ are compositions  $T_i=\tau_{j_1}\circ\ldots\circ\tau_{j_i}$ and
$\Sigma_i=\sigma_{j_1}\circ\ldots\circ\sigma_{j_i}$ of similarity transformations $\tau_j$ and $\sigma_j$ that are used at the very end of the
proof of Lemma~5.1 in \cite{GH_Sard}. The Cantor set $E_F$ is the same as the Cantor set $C$ in the proof of Lemma~5.1 in \cite{GH_Sard}.

In other words, $F$ restricted to an arbitrarily small ball $\overbar{\bbbd}_i$ that contains $x_o$ is a scaled copy of
$F:\overbar{\bbbb}^{k+1}\to\bbbi$.

The mapping $F$ is obtained through an iterative construction, described in detail in \cite{GH_Sard}. We shall present here a sketch of that construction.
\begin{proof}[Sketch of the construction of the mapping $F$]\mbox{}\\

By assumption, $\pi_k(\Sph^m)\neq 0$. By Freudenthal's theorem (\cite[Corollary~4.24]{hatcher}), also $\pi_{k-1}(\Sph^{m-1})\neq 0$; 
let $h:\Sph^{k-1}\to\Sph^{m-1}$ be a mapping that is not homotopic to a constant.

We begin by choosing in the ball $\bbbb^{k+1}$ disjoint, closed balls $\bbbb_i$, $i=1,2,\ldots, N=n^{m+1}$, of radius $\frac{2}{n}$,
all inside $\frac{1}{2}\bbbb^{k+1}$. 
This is possible, if $n$ is chosen sufficiently large, since, for $n$ large, the $(k+1)$-dimensional volume of $\frac{1}{2}\bbbb^{k+1}$ is much 
larger than the sum of volumes of $\bbbb_i$, $2^{-(k+1)} \gg n^{m+1}{2}^{k+1}{n}^{-(k+1)}$.

We define a $C^\infty$-mapping $F$ in $\bbbb^{k+1}\setminus \bigcup_{i=1}^N \bbbb_i$; then, the same mapping is iterated inside each of the balls $\bbbb_i=\bbbb_{i,1}$, which defines $F$ outside a family of $N^2$ second generation balls $\bbbb_{i,2}$, and so on -- in this way we obtain a mapping which is $C^\infty$ outside a Cantor set. Finally, we extend $F$ continuously to the Cantor set $C$ defined by the subsequent generations of balls $\bbbb_{i,j}$, as the intersection $C=\bigcap_{j=1}^\infty \bigcup_{i=1}^{N^j}\bbbb_{i,j}$.

The mapping $F$ in $\bbbb^{k+1}\setminus \bigcup_{i=1}^N \bbbb_i$ is (in principle -- see comments below) defined as a composition of four steps (see Figure \ref{fig:1}):
\begin{enumerate}
\item First, we realign all the balls $\bbbb_i$ inside $\bbbb^{k+1}$, by a diffeomorphism $G_1$ equal to the identity near $\partial \bbbb^{k+1}$, so that the images of $\bbbb_i$ are identical, disjoint, closed balls lying along the vertical axis of $\bbbb^{k+1}$. Obviously, this diffeomorphism has to shrink the balls $\bbbb_i$ somewhat.
\item The next step, the mapping $H:\bbbb^{k+1}\to \bbbb^{m+1}$, is defined in the following way: it maps $(k-1)$-dimensional spheres centered at the vertical axis of $\bbbb^{k+1}$, lying in the hyperplane orthogonal to that axis, to $(m-1)$-dimensional spheres of the same radius, centered at analogous points on the vertical axis of $\bbbb^{m+1}$. On each such sphere, $H$ is an appropriately scaled copy of the mapping $h$. This way, $H$ restricted to any $k$-sphere centered on the axis 
(in particular to $\partial\bbbb_{k+1}$ and to $\partial (G_1(\bbbb_i))$) equals (up to scaling) to the suspension of $h$.

\item Next, we define the diffeomorphism $G_2$: we inflate the ball $\bbbb^{m+1}$ to $\frac{1}{2}\sqrt{m+1}\bbbb^{m+1}$, so that we can inscribe the unit cube $[-\frac{1}{2},\frac{1}{2}]^{m+1}$ in it, and inside that ball, we rearrange the $N$ balls $H(G_1(\bbbb_i))$, so that each of them is almost inscribed in one of the cubes of the grid obtained by partitioning the unit cube into $N=n^{m+1}$ cubes of edge length $\frac{1}{n}$.

\item Finally, we project $\frac{1}{2}\sqrt{m+1}\bbbb^{m+1} \setminus \bigcup_{i=1}^N G_2(H(G_1(\bbbb_i)))$ onto the $m$-dimensional skeleton of the grid: first, we project the outside of the unit cube onto the boundary of the cube using the nearest point projection $\pi$, then in each of the $N$ closed cubes of the grid we use the mapping $R$ defined in the proof of Lemma~\ref{lem:fromball}. Even though $\pi$ is not smooth, this composition turns out to be smooth (see \cite[Lemma 5.3]{GH_Sard}).
\end{enumerate}

In fact, this construction of $F$ outside $\bigcup_i \bbbb_i$ is almost correct -- the resulting mapping is not $C^\infty$, but Lipschitz: it is not differentiable at the points of the vertical axis, and some technical modifications are necessary to make it $C^\infty$. Similarly, some additional work is necessary to glue $F$ with scaled copies of $F$ in each of the balls $\bbbb_i$ in a differentiable way. These are purely technical difficulties, the details are provided in \cite{GH_Sard}.

The third iteration of that construction is depicted in Figure \ref{fig:2}.

One easily checks that the derivative of $F$ tends to $0$ as we approach the points of the Cantor set $C$, thus the limit mapping, extended to the whole $\bbbb^{k+1}$, is $C^1$.
For each point of $\bbbb^{k+1}\setminus C$, the image of its small neighborhood is mapped to the $m$-dimensional skeleton of the grid, thus $\rank DF\leq m$ at all these points, and since $DF=0$ at the points of $C$, the condition $\rank DF\leq m$ holds everywhere in $\bbbb^{k+1}$.

\begin{figure}
\begin{tikzpicture}[scale=0.7]
\begin{scope}[scale=0.6]
\node[scale=1.2] at (9.7,-4) {$\bbbb^{k+1}$};
\filldraw[fill=lightgray!20!white] (6,0) circle (4);
\draw[darkgray] (2,0) arc [x radius=4, y radius =2, start angle =-180, end angle=0];
\filldraw[draw=gray,fill=lightgray!40!white] (6,0) circle (2);

\filldraw[ball color=white] (5.2,0.8) circle (0.5);
\filldraw[fill=white,opacity=0.5] (5.2,0.8) circle (0.5);
\draw (5.2,0.8) circle (0.5);
\draw[darkgray] (4.7,0.8) arc [x radius=0.5, y radius =0.25, start angle =-180, end angle=0];
\filldraw[ball color=white] (6.8,0.8) circle (0.5);
\filldraw[fill=white,opacity=0.5] (6.8,0.8) circle (0.5);
\draw (6.8,0.8) circle (0.5);
\draw[darkgray] (6.3,0.8) arc [x radius=0.5, y radius =0.25, start angle =-180, end angle=0];
\filldraw[ball color=white] (5.2,-0.8) circle (0.5);
\filldraw[fill=white,opacity=0.5] (5.2,-0.8) circle (0.5);
\draw (5.2,-0.8) circle (0.5);

\draw[darkgray] (4.7,-0.8) arc [x radius=0.5, y radius =0.25, start angle =-180, end angle=0];
\filldraw[ball color=white] (6.8,-0.8) circle (0.5);
\filldraw[fill=white,opacity=0.5] (6.8,-0.8) circle (0.5);
\draw (6.8,-0.8) circle (0.5);
\draw[darkgray] (6.3,-0.8) arc [x radius=0.5, y radius =0.25, start angle =-180, end angle=0];

\draw[lightgray!50!gray] (4,0) arc [x radius=2, y radius =1, start angle =-180, end angle=0];
\end{scope}
\begin{scope}[shift={(8,0)},scale=0.6]
\node[scale=1.2] at (9.7,-4) {$\bbbb^{k+1}$};
\filldraw[fill=lightgray!20!white] (6,0) circle (4);

\draw[dashed,thick] (6,5)--(6,3.6);
\draw[dashed, gray] (6,3.6)--(6,-4);
\draw[dashed,thick] (6,-4)--(6,-5);
\filldraw (6,3.7) circle (0.05);
\filldraw[lightgray] (6,-3.8) circle (0.05);
\foreach \i in {0,1,2,3}
{
\begin{scope}[shift={(0,-1.5*\i)}]
\filldraw[ball color=white] (6,2.3) circle (0.5);
\filldraw[fill=white,opacity=0.5] (6,2.3) circle (0.5);
\draw (6,2.3) circle (0.5);
\draw[gray] (5.5,2.3) arc [x radius=0.5, y radius =0.25, start angle =-180, end angle=0];
\end{scope}
}

\draw[darkgray] (2,0) arc [x radius=4, y radius =2, start angle =-180, end angle=0];
\end{scope}
\draw[thick,->] (6.5,0)--(8.5,0);
\node[above] at (7.5,0) {$G_1$};


\begin{scope}[shift={(8,0)}]
\draw[thick,->] (6.5,0)--(8.5,0);
\node[above] at (7.5,0) {$H$};
\end{scope}

\begin{scope}[shift={(16,0)}, scale=0.6]
\node[scale=1.2] at (9.7,-4) {$\bbbb^{m+1}$};
\draw[dashed,thick] (6,5)--(6,3.6);
\draw[dashed, gray] (6,3.6)--(6,-4);
\draw[dashed,thick] (6,-4)--(6,-5);
\filldraw (6,3.7) circle (0.05);
\filldraw[lightgray] (6,-3.8) circle (0.05);
\filldraw[color= lightgray,opacity=0.4] (6,0) circle (4);
\draw[darkgray] (6,0) circle (4);
\foreach \i in {1,2,3,4}
{
\begin{scope}[shift={(0,-1.5*(\i-1))}]
\filldraw[draw=darkgray,fill=white!95!lightgray] (6,2.3) circle (0.5);
\end{scope}
}

\end{scope}



\draw[thick,->] (19,-3.5)--(18.2,-5.5);
\node at (18,-4.3) {$G_2$};

\begin{scope}[shift={(13,-9)}, scale=0.7]


\filldraw[color= gray,opacity=0.4] (6,0) circle (4);
\draw[darkgray] (6,0) circle (4);
\draw[thick,dashed] ({6+4*cos(45)},{4*sin(45)})--({6+4*cos(-45)},{4*sin(-45)})--({6+4*cos(-135)},{4*sin(-135)})--({6+4*cos(135)},{4*sin(135)})--cycle;
\draw[thick,dashed] (6,{4*sin(45)})--(6,{-4*sin(45)});
\draw[thick,dashed] ({6-4*sin(45)},0)--({6+4*sin(45)},0);
\foreach \i in {-1,1}
{
\foreach \j in {-1,1}
{
\begin{scope}[shift={({sin(45)*2*\i} ,{sin(45)*2*\j})}]
\filldraw[draw=darkgray,fill= white] (6,0) circle ({10/11*2*sin(45)});
\foreach \t in {-0.75,0.75}
{
\draw[gray,->] ({6+0.3*\t},{0.6})--({6+0.3*\t},1);
}

\foreach \t in {-0.75,0.75}
{
\draw[gray,->] ({6+0.3*\t},{-0.6})--({6+0.3*\t},-1);
}
\foreach \t in {-0.75,0.75}
{
\draw[gray,->] (5.4,{0.3*\t})--(5,{0.3*\t});
}

\foreach \t in {-0.75,0.75}
{
\draw[gray,->] (6.6,{0.3*\t})--(7,{0.3*\t});
}
\end{scope}
}
}
\foreach \t in {60,70,...,120}
{
\draw[gray,->] ({6+4*cos(\t)},{4*sin(\t)})--({6+4*cos(\t)},{3.6*sin(\t)});
}

\foreach \t in {-30,-20,...,30}
{
\draw[gray,->] ({6+4*cos(\t)},{4*sin(\t)})--({6+3.6*cos(\t)},{4*sin(\t)});
}
\foreach \t in {-60,-70,...,-120}
{
\draw[gray,->] ({6+4*cos(\t)},{4*sin(\t)})--({6+4*cos(\t)},{3.6*sin(\t)});
}
\foreach \t in {150,160,...,210}
{
\draw[gray,->] ({6+4*cos(\t)},{4*sin(\t)})--({6+3.6*cos(\t)},{4*sin(\t)});
}
\node[scale=1.2] at (10,-4.7) {$\frac{1}{2}\sqrt{m+1}\,\bbbb^{m+1}$};
\end{scope}
\begin{scope}[shift={(2,-9)},scale=0.7]

\filldraw[thick, fill=white] ({6+4*cos(45)},{4*sin(45)})--({6+4*cos(-45)},{4*sin(-45)})--({6+4*cos(-135)},{4*sin(-135)})--({6+4*cos(135)},{4*sin(135)})--cycle;
\draw[thick] (6,{4*sin(45)})--(6,{-4*sin(45)});
\draw[thick] ({6-4*sin(45)},0)--({6+4*sin(45)},0);
\end{scope}
\draw[thick,<-] (10,-9)--(13,-9);
\node[above] at (11.5,-9) {$R$};

\end{tikzpicture}
\caption{The construction of $F$ in $\bbbb^{m+1}\setminus \bigcup_{i=1}^N \bbbb_i$.}\label{fig:1}
\end{figure}
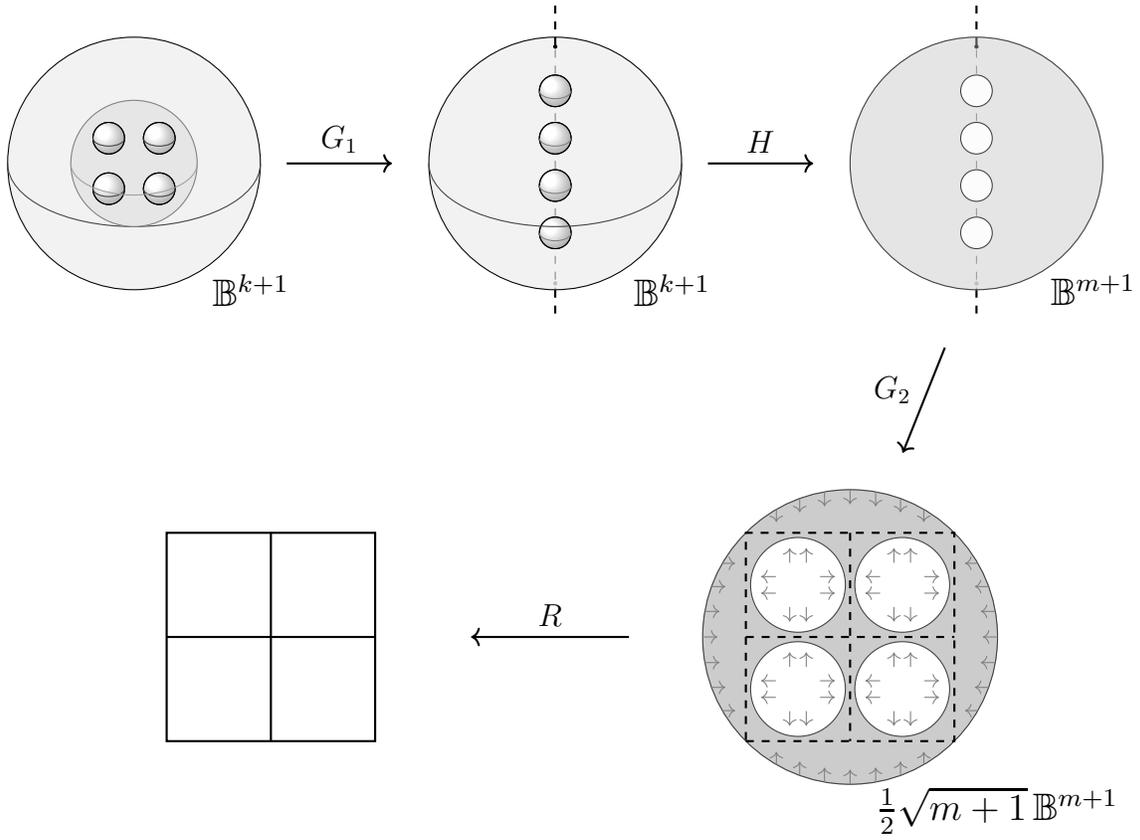
\begin{figure}
\begin{tikzpicture}[scale=0.8]
\filldraw[thick,fill= lightgray!10!white] (6,0) circle (4);
\draw[darkgray] (2,0) arc [x radius=4, y radius=2, start angle=-180, end angle=0];
\draw[darkgray, dotted] (2,0) arc [x radius=4, y radius=2, start angle=180, end angle=0];
\filldraw[fill= lightgray!20!white,draw=gray] (6,0) circle (2);

\begin{scope}[shift={(4.35,0.6)},scale=0.14]

\filldraw[draw=lightgray!50!white, fill=lightgray!40!white] (6,0) circle (2);
\draw[lightgray!40!gray] (4,0) arc [x radius=2, y radius =1, start angle =-180, end angle=0];
\filldraw[draw=lightgray!40!white,fill=lightgray!30!white] (6,0) circle (4);
\draw (6,0) circle (4);
\draw[gray] (2,0) arc [x radius=4, y radius =2, start angle =-180, end angle=0];
\filldraw[thick,fill=lightgray!30!white] (5.1,0.9) circle (0.7);
\foreach \i in {-1,1}
{
\foreach \j in {-1,1}
{\filldraw (5.1+0.15*\i,0.9+0.15*\j) circle (0.07);}
}
\draw[gray] (4.7,0.8) arc [x radius=0.5, y radius =0.25, start angle =-180, end angle=0];
\filldraw[thick,fill=lightgray!30!white] (6.9,0.9) circle (0.7);
\foreach \i in {-1,1}
{
\foreach \j in {-1,1}
{\filldraw (6.9+0.15*\i,0.9+0.15*\j) circle (0.07);}
}
\draw[gray] (6.3,0.8) arc [x radius=0.5, y radius =0.25, start angle =-180, end angle=0];
\filldraw[thick,fill=lightgray!30!white] (5.1,-0.9) circle (0.7);
\foreach \i in {-1,1}
{
\foreach \j in {-1,1}
{\filldraw (5.1+0.15*\i,-0.9+0.15*\j) circle (0.07);}
}
\draw[gray] (4.7,-0.8) arc [x radius=0.5, y radius =0.25, start angle =-180, end angle=0];
\filldraw[thick,fill=lightgray!30!white] (6.9,-0.9) circle (0.7);
\foreach \i in {-1,1}
{
\foreach \j in {-1,1}
{\filldraw (6.9+0.15*\i,-0.9+0.15*\j) circle (0.07);}
}
\draw[gray] (6.3,-0.8) arc [x radius=0.5, y radius =0.25, start angle =-180, end angle=0];
\end{scope}
\begin{scope}[shift={(6,0.6)},scale=0.14]

\filldraw[draw=lightgray!50!white, fill=lightgray!40!white] (6,0) circle (2);
\draw[lightgray!40!gray] (4,0) arc [x radius=2, y radius =1, start angle =-180, end angle=0];
\filldraw[draw=lightgray!40!white,fill=lightgray!30!white] (6,0) circle (4);
\draw (6,0) circle (4);
\draw[gray] (2,0) arc [x radius=4, y radius =2, start angle =-180, end angle=0];
\filldraw[thick,fill=lightgray!30!white] (5.1,0.9) circle (0.7);
\foreach \i in {-1,1}
{
\foreach \j in {-1,1}
{\filldraw (5.1+0.15*\i,0.9+0.15*\j) circle (0.07);}
}
\draw[gray] (4.7,0.8) arc [x radius=0.5, y radius =0.25, start angle =-180, end angle=0];
\filldraw[thick,fill=lightgray!30!white] (6.9,0.9) circle (0.7);
\foreach \i in {-1,1}
{
\foreach \j in {-1,1}
{\filldraw (6.9+0.15*\i,0.9+0.15*\j) circle (0.07);}
}
\draw[gray] (6.3,0.8) arc [x radius=0.5, y radius =0.25, start angle =-180, end angle=0];
\filldraw[thick,fill=lightgray!30!white] (5.1,-0.9) circle (0.7);
\foreach \i in {-1,1}
{
\foreach \j in {-1,1}
{\filldraw (5.1+0.15*\i,-0.9+0.15*\j) circle (0.07);}
}
\draw[gray] (4.7,-0.8) arc [x radius=0.5, y radius =0.25, start angle =-180, end angle=0];
\filldraw[thick,fill=lightgray!30!white] (6.9,-0.9) circle (0.7);
\foreach \i in {-1,1}
{
\foreach \j in {-1,1}
{\filldraw (6.9+0.15*\i,-0.9+0.15*\j) circle (0.07);}
}
\draw[gray] (6.3,-0.8) arc [x radius=0.5, y radius =0.25, start angle =-180, end angle=0];
\end{scope}

\begin{scope}[shift={(4.35,-0.8)},scale=0.14]

\filldraw[draw=lightgray!50!white, fill=lightgray!40!white] (6,0) circle (2);
\draw[lightgray!40!gray] (4,0) arc [x radius=2, y radius =1, start angle =-180, end angle=0];
\filldraw[draw=lightgray!40!white,fill=lightgray!30!white] (6,0) circle (4);
\draw (6,0) circle (4);
\draw[gray] (2,0) arc [x radius=4, y radius =2, start angle =-180, end angle=0];
\filldraw[thick,fill=lightgray!30!white] (5.1,0.9) circle (0.7);
\foreach \i in {-1,1}
{
\foreach \j in {-1,1}
{\filldraw (5.1+0.15*\i,0.9+0.15*\j) circle (0.07);}
}
\draw[gray] (4.7,0.8) arc [x radius=0.5, y radius =0.25, start angle =-180, end angle=0];
\filldraw[thick,fill=lightgray!30!white] (6.9,0.9) circle (0.7);
\foreach \i in {-1,1}
{
\foreach \j in {-1,1}
{\filldraw (6.9+0.15*\i,0.9+0.15*\j) circle (0.07);}
}
\draw[gray] (6.3,0.8) arc [x radius=0.5, y radius =0.25, start angle =-180, end angle=0];
\filldraw[thick,fill=lightgray!30!white] (5.1,-0.9) circle (0.7);
\foreach \i in {-1,1}
{
\foreach \j in {-1,1}
{\filldraw (5.1+0.15*\i,-0.9+0.15*\j) circle (0.07);}
}
\draw[gray] (4.7,-0.8) arc [x radius=0.5, y radius =0.25, start angle =-180, end angle=0];
\filldraw[thick,fill=lightgray!30!white] (6.9,-0.9) circle (0.7);
\foreach \i in {-1,1}
{
\foreach \j in {-1,1}
{\filldraw (6.9+0.15*\i,-0.9+0.15*\j) circle (0.07);}
}
\draw[gray] (6.3,-0.8) arc [x radius=0.5, y radius =0.25, start angle =-180, end angle=0];
\end{scope}
\begin{scope}[shift={(6,-0.8)},scale=0.14]

\filldraw[draw=lightgray!50!white, fill=lightgray!40!white] (6,0) circle (2);
\draw[lightgray!40!gray] (4,0) arc [x radius=2, y radius =1, start angle =-180, end angle=0];
\filldraw[draw=lightgray!40!white,fill=lightgray!30!white] (6,0) circle (4);
\draw (6,0) circle (4);
\draw[gray] (2,0) arc [x radius=4, y radius =2, start angle =-180, end angle=0];
\filldraw[thick,fill=lightgray!30!white] (5.1,0.9) circle (0.7);
\foreach \i in {-1,1}
{
\foreach \j in {-1,1}
{\filldraw (5.1+0.15*\i,0.9+0.15*\j) circle (0.07);}
}
\draw[gray] (4.7,0.8) arc [x radius=0.5, y radius =0.25, start angle =-180, end angle=0];
\filldraw[thick,fill=lightgray!30!white] (6.9,0.9) circle (0.7);
\foreach \i in {-1,1}
{
\foreach \j in {-1,1}
{\filldraw (6.9+0.15*\i,0.9+0.15*\j) circle (0.07);}
}
\draw[gray] (6.3,0.8) arc [x radius=0.5, y radius =0.25, start angle =-180, end angle=0];
\filldraw[thick,fill=lightgray!30!white] (5.1,-0.9) circle (0.7);
\foreach \i in {-1,1}
{
\foreach \j in {-1,1}
{\filldraw (5.1+0.15*\i,-0.9+0.15*\j) circle (0.07);}
}
\draw[gray] (4.7,-0.8) arc [x radius=0.5, y radius =0.25, start angle =-180, end angle=0];
\filldraw[thick,fill=lightgray!30!white] (6.9,-0.9) circle (0.7);
\foreach \i in {-1,1}
{
\foreach \j in {-1,1}
{\filldraw (6.9+0.15*\i,-0.9+0.15*\j) circle (0.07);}
}
\draw[gray] (6.3,-0.8) arc [x radius=0.5, y radius =0.25, start angle =-180, end angle=0];
\end{scope}

\draw[lightgray!30!gray,draw=gray] (4,0) arc [x radius=2, y radius =1, start angle =-180, end angle=0];

\draw[help lines] (6.89,0.81) -- (10.18,5.1);
\draw[help lines] (7.07,0.68) -- (11.65,3.72);

\begin{scope}[shift={(9.5,4.5)}, scale=0.25]
\filldraw[thick,fill= lightgray!10!white] (6,0) circle (4);
\draw[gray] (2,0) arc [x radius=4, y radius=2, start angle=-180, end angle=0];
\draw[darkgray, dotted] (2,0) arc [x radius=4, y radius=2, start angle=180, end angle=0];
\filldraw[fill= lightgray!20!white,draw=lightgray] (6,0) circle (2);

\begin{scope}[shift={(4.35,0.6)},scale=0.14]
\draw[thick] (6,0) circle (4);
\draw[gray] (2,0) arc [x radius=4, y radius =2, start angle =-180, end angle=0];
\end{scope}
\begin{scope}[shift={(6,0.6)},scale=0.14]
\draw[thick] (6,0) circle (4);
\draw[gray] (2,0) arc [x radius=4, y radius =2, start angle =-180, end angle=0];
\end{scope}
\begin{scope}[shift={(4.35,-0.8)},scale=0.14]
\draw[thick] (6,0) circle (4);
\draw[gray] (2,0) arc [x radius=4, y radius =2, start angle =-180, end angle=0];
\end{scope}
\begin{scope}[shift={(6,-0.8)},scale=0.14]
\draw[thick] (6,0) circle (4);
\draw[gray] (2,0) arc [x radius=4, y radius =2, start angle =-180, end angle=0];
\end{scope}
\end{scope}


\begin{scope}[scale=1.2]
\foreach \i in {0,1,...,8}
{
\draw[thick] (13+0.5*\i,2)--(13+0.5*\i,-2);
\draw[thick] (13,2-0.5*\i)--(17,2-0.5*\i);
}
\end{scope}

\draw[->] (12,0)--(14,0);
\node[scale=1.3] at (13,0.8) {$F$};
\end{tikzpicture}
\caption{The third iteration: $F$ outside the third generation of balls $\bigcup_i \bbbb_{3,i}$.}\label{fig:2}
\end{figure}
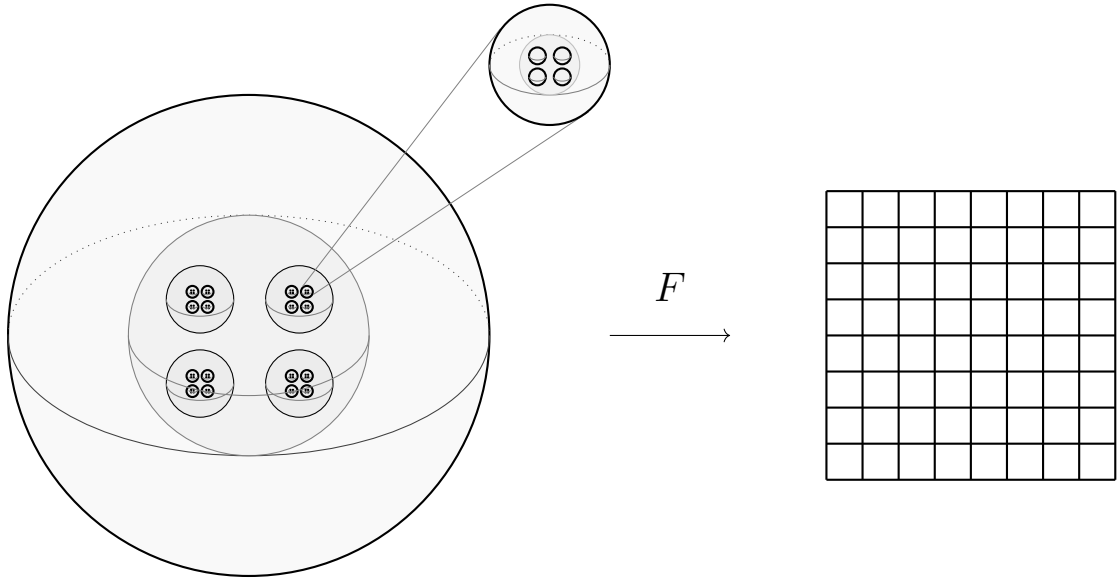

\end{proof}

Lemma~\ref{GH} allows us to complete the proof of Lemma~\ref{lem:fromball} as follows.
Let $x_o\in E_F$ and $1-|x_o|>\eps>0$ be given.
Suppose to the contrary, that there is a sequence
$G_j\in C^{k-m+1}(\bbbb^{k+1}(x_o,\eps),\bbbr^{m+1})$ with $\rank DG_j\leq m$, that is uniformly convergent
to $F$ on $\bbbb^{k+1}(x_o,\eps)$.

Let $\bbbd_i$ be a sequence of balls convergent to $x_o$ as in the statement of Lemma~\ref{GH}.
If $i$ is sufficiently large, then $\overbar{\bbbd}_i\subset\bbbb^{k+1}(x_o,\eps)$ and the sequence $G_j$ converges uniformly to $F$ on
$\overbar{\bbbd}_i$. Hence
$$
\tilde{G}_j:=T_i^{-1}\circ G_j|_{\overbar{\bbbd}_i}\circ\Sigma_i:\overbar{\bbbb}^{k+1}\to\bbbr^{m+1}
$$
converges uniformly to
$$
T_i^{-1}\circ F|_{\overbar{\bbbd}_i}\circ\Sigma_i=F:\overbar{\bbbb}^{k+1}\to\bbbi.
$$
Obviously, $\rank D\tilde{G}_j\leq m$.
Since $\tilde{G}_j$ is uniformly close to $F$ on $\partial{\bbbb}^{k+1}$ and
$F|_{\partial\bbbb^{k+1}}:\Sph^k\to\partial\bbbi$ is not homotopic to a constant map, it easily follows that for $j$ sufficiently large
the image $\tilde{G}_j({\bbbb}^{k+1})$ contains the cube $\frac{1}{2}\bbbi$ that is concentric with $\bbbi$ and has half the diameter
(as otherwise, using a projection onto the boundary of the cube, one could construct a homotopy of
$F|_{\partial\bbbb^{k+1}}:\Sph^k\to\partial\bbbi$ to a constant map).

Recall that according to Sard's theorem \cite{sard,sternberg}, the map $\tilde{G}_j\in C^{k-m+1}$ maps the set of its critical points to a set of measure zero.
Since $\rank D\tilde{G}_j\leq m$, all points in $\bbbb^{k+1}$ are critical, so the set $\tilde{G}_j({\bbbb}^{k+1})$ has measure zero, which contradicts the fact
that it contains the cube $\frac{1}{2}\bbbi$. The proof is complete.
\end{proof}

\end{document}